%% file: main.tex
\let\ORIlabel\label
\let\ORIrefstepcounter\refstepcounter
\AddToHook{package/hyperref/before}{%
  \let\label\ORIlabel
  \let\refstepcounter\ORIrefstepcounter
}
\documentclass[onefignum,onetabnum]{siamonline171218}

\usepackage{graphicx}
\usepackage[numbers]{natbib}
\setcitestyle{numbers,round}
\usepackage{subcaption}
\usepackage[utf8]{inputenc} 
\usepackage[T1]{fontenc}    
\usepackage{hyperref}       
\usepackage{url}            
\usepackage{booktabs}       
\usepackage{amsfonts}       
\usepackage{nicefrac}       
\usepackage{xcolor}   
\usepackage{graphicx}
\usepackage{makecell}
\usepackage{comment}
\usepackage{svg}

\usepackage{graphicx}
\usepackage{mathtools}
\usepackage{amssymb}
\usepackage[makeroom]{cancel}
\usepackage{tikz}

\usetikzlibrary{positioning,calc}
\usetikzlibrary{arrows.meta}
\usepackage{tikz-cd}

\usepackage{parskip}
\usepackage{setspace}
\usepackage{enumerate}

\usepackage[utf8]{inputenc}
\usepackage[T1]{fontenc}
\usepackage{hyperref}
\usepackage{url}
\usepackage{booktabs}
\usepackage{amsfonts}
\usepackage{nicefrac} 
\usepackage{xcolor}
\usepackage{array}

\renewcommand{\thefigure}{\arabic{figure}}
\newcolumntype{L}[1]{>{\raggedright\arraybackslash}p{#1}}

\ifpdf
\hypersetup{
  pdftitle={KANDy: Kolmogorov-Arnold Networks and Dynamical System Discovery},
  pdfauthor={K. Slote, J. Fish, and E. Bollt}
}
\fi

\input{ex_shared}

\begin{document}

\maketitle

\begin{abstract}
We introduce the Kolmogorov-Arnold Network for Dynamics (KANDy) as a zero-depth, wide neural architecture capable of discovering governing equations in chaotic and complex dynamical systems. Building on the foundation of Kolmogorov-Arnold Networks (KANs), KANDy explicitly learns governing equations by replacing sparse regression with a KAN. The synthesis of KANs and sparse regression addresses the limitations of equation discovery for KANs applied to dynamical systems and overcomes cases where sparse regression is hindered by sparsity constraints. Additionally, we show that our model, applied to the Hopf Fibration, recovers topological structure, thereby improving coherence with attractor properties. We apply our model to discrete and continuous dynamical systems, as well as to chaotic partial differential equations (PDEs). These results position KANDy as an interpretable and effective alternative for data-driven modeling of nonlinear dynamical systems. 
\end{abstract}
\begin{keywords}
Governing Equation Discovery, Kolmogorov--Arnold Networks, Data-Driven Dynamical Systems
\end{keywords}

\section*{Introduction}

This work aims to synthesize two complementary approaches: Kolmogorov-Arnold Networks (KANs)~\cite{liu2025kan} and sparse regression for equation discovery in dynamical systems~\cite{sindy, rudy2017data}. In applied dynamical systems, a considerable obstacle is the rapid collapse of forecast accuracy in the presence of chaos. For example, when predicting the future state of a chaotic system such as the Lorenz attractor using advanced data-driven models, forecasted trajectories typically remain accurate for only one or two Lyapunov times before diverging significantly from the true system, resulting in a loss of predictive value. This rapid loss of fidelity is a routine obstacle that impedes meaningful forecasting and diminishes model utility in practical applications. Factors such as sensitivity to initial conditions, hidden symmetries, fractal attractor geometry, and violations of sparsity conditions collectively complicate both trajectory forecasting and the discovery of governing equations. Chaotic dynamical systems canonically exhibit exponential divergence of nearby trajectories and possess a fundamental predictability limit determined by the largest Lyapunov exponent~\cite{lorenz1963deterministic}. Even when governing equations are known, forecasts decorrelate on a timescale associated with the Lyapunov time, after which only statistical properties of the attractor are meaningful. When the equations are unknown, these limitations are further exacerbated. Successful data-driven models must infer dynamics from finite training windows that may span only a few Lyapunov times. Model discovery from observables is fundamentally ill-posed as an inverse problem and is further constrained by limited measurements~\cite{Bakarji2023DelayAutoencoders}. The proposed method is explicitly formulated to tackle these challenges by preserving the attractor geometry and the governing equation structure beyond conventional predictability limits, where other models fail.

Discovering models from observations and measurements of such systems is a fundamental problem for scientists and researchers. The data-driven identification, and forecasting of nonlinear dynamical systems has attracted a great deal of attention~\cite{FS1987,CM1987,Casdagli1989,SGMCPW1990,KR1990,GS1990,Gouesbet1991,TE1992,BBBB1992,Longtin1993,Murray1993,Sauer1994_PRL,Sugihara1994,FK1995,Parlitz1996,SSCBS1996,Szpiro1997,HKS1999,Bollt2000,HKMS2000,Sello2001,MNSSH2001,Smith2002,Judd2003,Sauer2004,YB2007,TZJ2007,WYLKG2011a,WLGY2011b,WYLKH2011c,SNWL2012a,SWL2012b,SWL2014a,SLWD2014b,SWFDL2014,SWWL2016,ASB2020} eliciting diverse methodologies e.g., calculating the information contained in sequential observations deducing deterministic equations~\cite{CM1987}, approximating a nonlinear system by linear equations~\cite{FS1987,Gouesbet1991,Sauer1994_PRL}, fitting differential equations~\cite{BBBB1992}, exploiting chaotic synchronization~\cite{Parlitz1996} or genetic algorithms~\cite{Szpiro1997,TZJ2007}, and the inverse Frobenius-Perron approach to designing a dynamical system ``near’’ the original system~\cite{Bollt2000}, or using the least-squares best approximation for modeling~\cite{YB2007}. Recently, a topic that has gained considerable interest is sparse optimization, where the assumption is that the system functions have a sparse structure, represented by a small number of elementary mathematical functions (e.g., a few power-series, Fourier-series, or polynomial terms), and the goal is to estimate the coefficients associated with these terms. A higher-order series expansion generally will have the vast majority of coefficients be identically zero, which is naturally formulated~\cite{WYLKG2011a,YLG2012} as a compressive-sensing problem~\cite{Candes2008Introduction, Baraniuk2007Compressed, Donoho2006Compressed, Candes2006Stable, Candes2006Robust}.

Sparse optimization is effective for systems where the governing equations are sufficiently sparse, such as the chaotic Lorenz~\cite{lorenz1963deterministic} and R\"{o}ssler~\cite{Rossler1976} systems, where the velocity fields contain a small number of low-order power-series terms. The sparsity condition presents a fundamental limitation: it works only when the system, and the equations in particular, have a sparse structure. Dynamical systems that violate the sparsity condition arise in biological and physical systems. For example, the Ikeda map, which describes the propagation of an optical pulse in a cavity with a nonlinear medium~\cite{Ikeda1979, HJM1985}, takes the form $x_{n+1} = 1 + u [x_n \cos(\theta_n) - y_n \sin(\theta_n)]$ and $y_{n+1} = u [x_n \sin(\theta_n) + y_n \cos(\theta_n)]$ with $\theta_n = k - p/(1 + x_n^2 + y_n^2)$, leading to an infinite series when expanded and thus violating the sparsity assumption~\cite{lai2021finding}. Additional discrete dynamical systems arise in ecological systems and gene-regulatory circuits whose governing equations have a Holling-type structure~\cite{Holling1959a,Holling1959b} that violates the sparsity condition~\cite{JHSLGHL2018}; for instance, the Holling Type II term $\frac{ax}{1 + bx}$ cannot be represented accurately with just a handful of polynomial terms. An example of a continuous dynamical system that lacks sparsity is the standard Kuramoto model, where each oscillator's dynamics are given by $\dot{\theta}_i = \omega_i + K/N \sum_{j=1}^{N} \sin(\theta_j - \theta_i)$, and the system identification degrades with network size and coupling complexity~\cite{Kuramoto1975SelfEntrapment, Basiri2025SINDyG}. Additional potential issues with sparse optimization include topological invariance, in which global topology cannot be simplified by coordinate transformations, truncation, or local learning. The Kuramoto-Sivashinsky (KS) equation and Inviscid Burgers’ equations are two such systems whose attractors possess translational and reflectional invariance, which can obstruct local solutions.

Forecasting chaotic systems, not only for Lorenz-like systems, but for chaotic attractors with spatiotemporal chaos such as the KS equation, several approaches stand out in terms of long-term forecasting, such as (i) Reservoir computing (RC), and in particular echo state networks (ESNs)~\cite{jaeger2001esn}, achieving autoregressive rollout forecasts greater than 6-7 Lyapunov times, (ii) continuous-time neural architectures, such as NeuralODEs~\cite{chen2018neuralode}, learning representative vector fields and smooth dynamics from trajectory data, and (iii) operator learning, e.g. DeepONet and other variants, by learning mappings between function spaces, enabling data-driven surrogates for parametric partial differential equations (PDEs) and turbulent flows~\cite{lu2021deeponet,li2020fourierOperator} as well as reproducing long-term ``climate’’ statistics~\cite{pathak2017lyapunov,pathak2018reservoir}. Across these families of models, a common observation is that, although forecast models’ short-horizon one-step errors are often minimal, autoregressive rollouts still exhibit exponential error growth and phase drift that co-occur with the system’s Lyapunov spectrum. Forecasting the Lorenz system is a canonical benchmark for data-driven chaotic dynamics modeling \cite{pathak2018reservoir, massaroli2020stabilityODE, rubanova2019latentODE,vlachas2018lstm,abarbanel1993lorenz,floriankrach2022NJODE,lu2020dataDrivenPDE}. The RC, neural-ODEs, and LSTM architectures have all demonstrated high short-term accuracy but fast error growth and phase decorrelation as prediction horizons reach the Lyapunov time \cite{vlachas2018lstm, pathak2018reservoir, massaroli2020stabilityODE}. Physics-informed neural networks and operator-learning methods have further improved geometric attractor reconstruction and interpretability for long-term rollouts, but still encounter fundamentally limited deterministic predictability due to chaos \cite{lu2020dataDrivenPDE, floriankrach2022NJODE}. An additional body of work on forecasting dynamical systems exploits Deep Learning (DL) approaches for chaotic systems, achieving accurate short-term forecasts but suffering from rapid error growth and phase decorrelation at longer horizons~\cite{vlachas2018lstm}. 

With the advent of continuous-time deep learning, neural ODEs~\cite{chen2018neuralode} and their variants~\cite{rubanova2019latentODE,kidger2020neuralCDE} have provided more flexible representations of dynamical systems. When trained on chaotic trajectories, such models can recover smooth vector fields consistent with the underlying ODEs, but their long-term rollouts still degrade due to accumulated phase errors. Recently, physics-informed neural networks (PINNs) and operator-learning approaches~\cite{lu2021deeponet,li2020fourierOperator} have attempted to learn differential operators directly, yet they generally require knowledge of governing equations or dense temporal supervision.

\begin{figure}[t]
\centering
\includegraphics[width=1.0\linewidth]{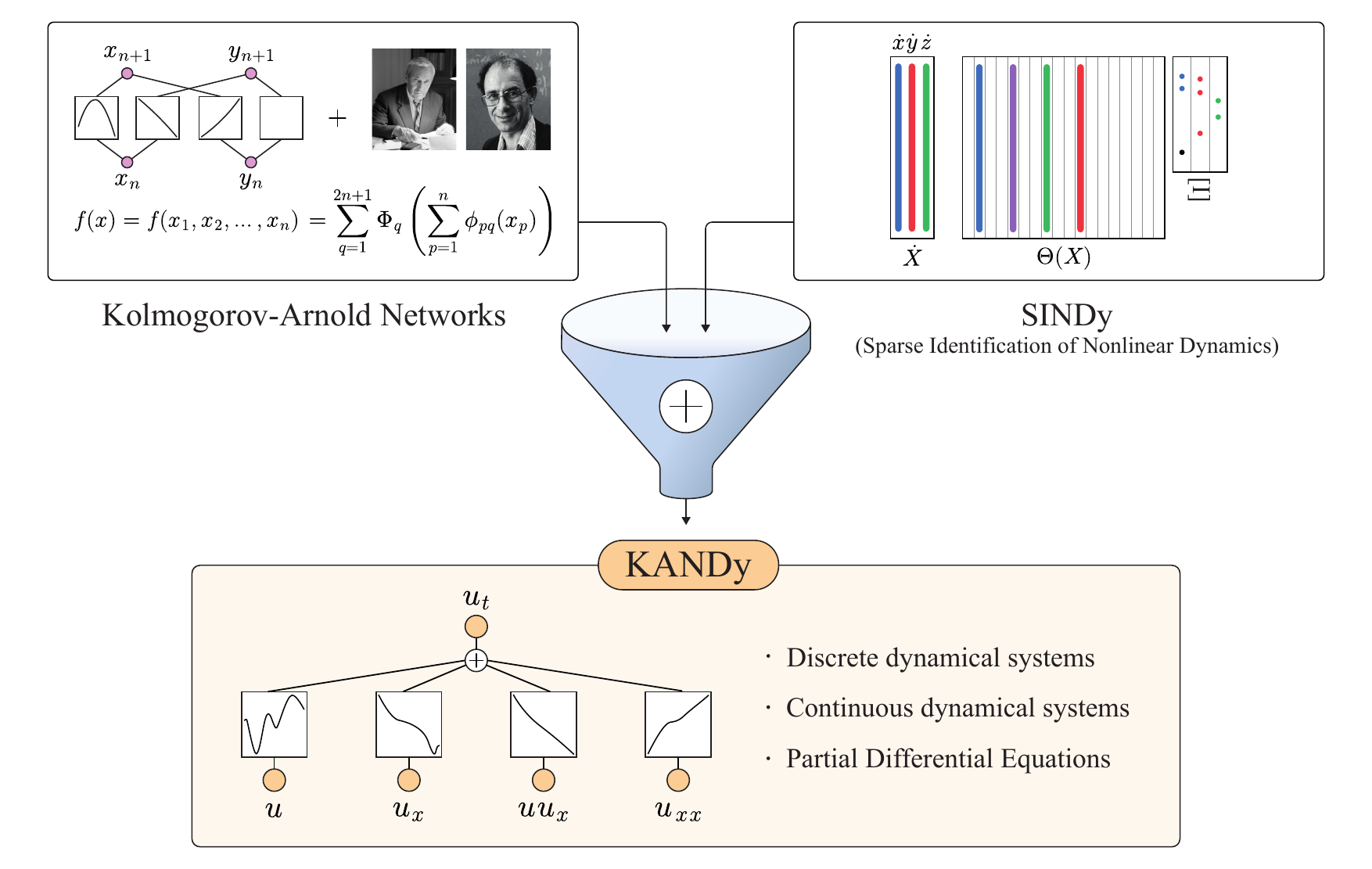}
\caption{Left: Kolmogorov–Arnold Networks (KANs), inspired by the Kolmogorov–Arnold representation theorem (KAT; portraits of Andrey Kolmogorov and Vladimir Arnold), which states that multivariate functions can be expressed as sums of compositions of univariate functions. KAN replaces traditional neural network weights with learnable spline functions. Right: Sparse Identification of Nonlinear Dynamics (SINDy), where measurements of system states form a library of candidate nonlinear terms and sparse regression selects a minimal set explaining the dynamics. Bottom: The proposed KANDy framework maps system states to a lifted invariant subspace whose features act as a candidate library. A wide, zero-depth KAN then learns nonlinear transformations approximating the system vector field, enabling interpretable symbolic governing equations while relaxing strict library selection and sparsity requirements. The core advantage of this approach is that it is forgiving of a priori library selection and relaxes the sparsity condition required for sparse regression. 
}
\label{fig:flow-chart}
\end{figure}

At the same time, recent work on Kolmogorov-Arnold Networks (KANs) introduces new classes of architectures (Figure~\ref{fig:flow-chart}) replacing conventional weights and biases with learnable univariate spline functions inspired by the Kolmogorov-Arnold representation theorem~\cite{liu2025kan, liu2024kan2}, which functions as a dimension reduction applied to the inner variables of continuous functions~\cite{Kolmogorov1957, Arnold2009_Kolmogorov}. KANs exhibit marked advantages over standard neural networks for applications to dynamical systems, including interpretability and the ability to capture structured nonlinearities. By parameterizing nonlinear continuous functions using a small number of univariate components, KANs offer a natural framework for representing smooth vector fields, embeddings, and the latent coordinates of dynamical systems. This makes them attractive for chaos-aware sequence modeling, where both local nonlinear transformations and global geometric structure are essential.

KANs were introduced as interpretable and efficient alternatives to multilayer perceptrons (MLPs), leveraging learnable activation functions based on the Kolmogorov–Arnold representation theorem \cite{liu2025kan,coxkan2025, oxfordgenomicskan2025,xkan2024,naturegraphkan2025,scidirectkan2025,iclrkan2024}. Recently, it has been shown that much smaller KAN architectures can match or outperform standard MLPs in regression, PDE solving, and genomics, often with lower computational cost and greater transparency. For example, CoxKAN applies KANs to survival analysis, yielding more interpretable symbolic hazard function formulas and outperforming both classical statistical models and deep learning alternatives \cite{coxkan2025}. Similarly, LKANs and CKANs in genomics tasks achieve results comparable to or better than dense and convolutional baselines, with enhanced efficiency and feature selection capabilities \cite{oxfordgenomicskan2025}. A key aspect of this interpretability is that KANs often yield explicit symbolic expressions for learned mappings. For instance, consider a toy regression where a KAN with two univariate spline functions learns to approximate a target such as $f(x, y) = 0.5 \sin(x) + 2y$: KAN can directly represent this as $\phi_1(x) + \phi_2(y)$, with $\phi_1(x) \approx 0.5 \sin(x)$ and $\phi_2(y) \approx 2y$, while for a compositional function one may have $f(x, y) = xy = \exp{(\log(x) + \log(y)})$, making the learned model transparent and human-readable. This level of explicitness highlights the transparency advantage of KANs over traditional deep neural networks.

Algorithmic advances such as X-KAN introduce evolutionary rule-based training and demonstrate that local KAN models outperform global MLP and KAN baselines for highly nonlinear targets \cite{xkan2024}. Recent theoretical and experimental results confirm that KANs offer superior scaling laws and direct physical interpretability, supporting their adoption in physics, symbolic regression, molecular property inference, and scientific machine learning \cite{liu2025kan,naturegraphkan2025,scidirectkan2025,iclrkan2024}.

This work introduces a new architecture, the Kolmogorov-Arnold Network for Dynamics (KANDy), characterized by zero deep layers — equivalent to a regression — and increased-width parameters that incorporate physically informed, or \emph{lifted}, terms. This design bridges the gap between sparse regression and KANs, enabling the discovery of governing equations from dynamical systems.

The proposed architecture is evaluated on canonical chaotic benchmarks of increasing complexity. The evaluation focuses on three primary criteria: (i) equation discovery accuracy, assessing how closely the recovered functional form matches the true governing equations; (ii) rollout stability and attractor geometry preservation, measuring the consistency and long-term fidelity of predictions relative to the true system; and (iii) symmetry and topology recovery, evaluating the model's ability to capture invariant structural or geometric properties of the underlying dynamics. For low-dimensional chaos derived from ordinary differential equations (ODEs), the Lorenz system is used as an example, given its well-characterized strange attractor and Lyapunov spectrum~\cite{lorenz1963deterministic}. To demonstrate KANDy's effectiveness in high-dimensional spatiotemporal chaos, the KS-equation with periodic boundary conditions, and Inviscid-Burgers' equation with 20 random Fourier modes are employed, representing a prototypical partial differential equation (PDE) that exhibits pattern formation, broadband spectra, and extensive Lyapunov spectra~\cite{hyman1986kseqn}. In both scenarios, a zero-depth KAN is trained on library-derived inputs from system states, and multi-step forecasting is performed using a correlative and spectral loss that prioritizes near-term accuracy while maintaining long-horizon structural coherence.

\subsection*{Recent Work}

In \citet{Panahi2025KANModelDiscovery}, it is shown that KANs applied to discrete dynamical systems reconstruct the statistical properties of attractors, but the authors provide no universal principled way to estimate governing equations. Several works exist that apply KANs to ODEs and dynamical systems. In \citet{Bagrow2025}, Multi-exit Kolmogorov–Arnold networks, the learning-to-exit algorithm supplies a novel loss function to improve the prediction of KANs applied to dynamical systems. In \citet{Koenig2024}, the LEAN-KAN algorithm applies symbolic regression to KAN activation weights to recover the Lotka–Volterra predator–prey model, and this work continues Kan-ODE, and in \citet{Koenig2025}, successive work moves to improve the estimation of equations by improving the Multiplication layer of KANs to improve estimation.

\subsection*{Our Contribution}

The results demonstrate that KANDy, specifically zero-depth and wide KANs with physically-informed or lifted features, (i) discovers governing equations of dynamical systems, (ii) provides enhanced modeling capability where sparse regression is ineffective, (iii) maintains attractor geometry and stable autoregressive rollouts in cases where additional network depth either degrades or does not improve performance, and (iv) successfully discovers governing equations for multiple types of chaotic systems, serving as a benchmark where sparse regression methods struggle, even reconstructing the underlying network as a notable outcome. 
Qualitatively, it improves upon traditional sparse regression techniques by integrating learned model trajectories during training, rather than relying solely on least-squares solutions.

Additionally, our analysis highlights that deep-KANs alone are insufficient for estimating governing equations of dynamical systems since the dynamics of non-linear terms, such as the coupling term $xy$ of $\dot{z}$ in the Lorenz system cannot be reduced to 1D splines because the dynamics of this non-linear term live in a higher-dimensional manifold and a 1D reduction cannot exist within standard multiplication nodes. Additionally, typical KAN function fitting applies a function per edge and composes them, obfuscating the true global equation.

\section*{Mathematical Background}
\stepcounter{section}
\setcounter{equation}{0}

First, we introduce the mathematical formalism for the KAN\-Dy method, e.g. lifting coordinates of our systems to a linearized and infinite-dimensional space and the Kolmogorov-Arnold Representation theorem (KAT). 
We also introduce discrete and continuous dynamical systems, including the Henon and Lorenz systems, as well as chaotic PDE benchmarks such as the KS equation and the Inviscid Burgers equation.
Additionally, we introduce the Hopf fibration, in which the $3$-sphere, a three-dimensional surface sitting inside four-dimensional Euclidean space, can be viewed as built from circles arranged over the $2$-sphere, a two-dimensional surface sitting inside three-dimensional Euclidean space.

\subsection*{Kolmogorov-Arnold Representation theorem}

Vladimir Arnold and Andrey Kolmogorov established that if $f$ is a multivariate continuous function on a bounded domain, then $f$ can be written as a finite composition of continuous functions of a single variable and addition, e.g, for a continuous function $f:[0,1]^n\to\mathbb{R}$,
\begin{equation}\label{eq:KART}
    f(\mathbf{x}) = f(x_1,\cdots,x_n)=\sum_{q=1}^{2n+1} \Phi_q\left(\sum_{p=1}^n\phi_{q,p}(x_p)\right),
\end{equation}
where $\phi_{q,p}:[0,1]\to\mathbb{R}$ and $\Phi_q:\mathbb{R}\to\mathbb{R}$. The Kolmogorov-Arnold representation theorem (KAT)~\eqref{eq:KART} is itself a type of dimension reduction for multivariate functions, reducing a multivariate function into sums of univariate functions composed by outer continuous functions. However, constructive proofs of KAT are often heavily conditioned, and the inner functions are often ``Cantor-like," non-smooth, fractal, so they may not be learnable in practice~\cite{poggio2020theoretical,girosi1989representation}. Because of this pathological behavior, KAT was largely deprioritized in machine learning research, regarded as theoretically justified but useless in practice~\cite{poggio2020theoretical,girosi1989representation}. 

Following from Eq.~\eqref{eq:KART}, when envisioning this theorem applied to neural network architectures, the proposed architecture has only two non-linear layers and a small number of terms ($2n+1$) in the hidden layer, \citet{liu2025kan} generalizes this result to a network with arbitrary widths and depths observing that most functions in physics are smooth with sparse compositional structures facilitating Kolmogorov-Arnold representations ~\cite{lin2017does}.

Previous studies investigating the use of KAT to build neural networks~\cite{sprecher2002space,koppen2002training,lin1993realization,lai2021kolmogorov,leni2013kolmogorov,montanelli2020error, he2023optimal} mostly relied on the original depth-2, width-($2n+1$) representation, and many did not have the opportunity to leverage more modern techniques like  backpropagation to train the networks. In \citet{lai2021kolmogorov} used a depth-2 width-($2n+1$) representation, breaking the curse of dimensionality observed, both empirically and with an approximation theory. \citet{liu2025kan} generalizes the original KAT to arbitrary widths and depths, introducing KANS and highlighting its accuracy and interpretability.

\subsection*{Lorenz System}

The Lorenz system provides a canonical system for chaotic dynamical systems and for estimating governing equations. Its dynamics are governed by

\begin{equation}\label{eq:lorenz}
\begin{aligned}
\frac{dx}{dt} &= \sigma (y - x), \\
\frac{dy}{dt} &= x(\rho - z) - y, \\
\frac{dz}{dt} &= xy - \beta z,
\end{aligned}
\end{equation}

\noindent
where $\sigma$, $\rho$, and $\beta$ denote the system parameters. The solution is given by the state variables $x(t)$, $y(t)$, and $z(t)$ representing idealized components of the system: $x(t)$ is proportional to the intensity of convective motion, $y(t)$ represents the horizontal temperature difference between ascending and descending currents, and $z(t)$ corresponds to the vertical temperature stratification. The parameters $\sigma > 0$, $\rho > 0$, and $\beta > 0$ are constant system parameters, where  $\sigma$ is the coupling strength between temperature and velocity fields, $\rho$ governs the intensity of thermal forcing, and $\beta$ is a geometric factor related to the vertical temperatures. Different choices of these parameters lead to qualitatively distinct dynamical regimes, including steady states, periodic orbits, and chaotic behavior. This system exhibits a strange attractor characterized by sensitivity to initial conditions.

\subsection*{Kuramoto--Sivashinsky equation}
The KS equation serves as a standard benchmark for spatial-temporal systems. A pseudospectral scheme is used to numerically integrate the system's nonlinear dynamics. In contrast to low-dimensional chaotic systems, such as the Lorenz attractor, the KS equation exhibits broadband temporal spectra, long-range spatial coupling, and an extensive Lyapunov spectrum. These properties make it an ideal test case for evaluating the proposed KANDy architecture's ability to learn complex PDE-driven chaotic behavior and governing equations.

\begin{equation}\label{eq:ks_governing}
\frac{\partial u}{\partial t}
= -u \frac{\partial u}{\partial x}
 - \nu \frac{\partial^2 u}{\partial x^2}
 - \frac{\partial^4 u}{\partial x^4},
\end{equation}

The KS equation on a periodic domain $x \in [0,L]$ is given by Equation~\eqref{eq:ks_governing} where $u(x,t)$ is the dynamical field and $\nu > 0$ is the effective viscosity.

\subsection*{Inviscid Burgers}

The inviscid Burgers equation is a system of PDEs describing nonlinear wave propagation and shock formation in hyperbolic conservation laws, where our experiment shows that it is an edge case where sparse regression struggles. Its dynamics are governed by

\begin{equation}\label{eq:inciscid_burgers_governing}
\frac{\partial u}{\partial t} + u \frac{\partial u}{\partial x} = 0,
\end{equation}

\noindent
where $u(x,t)$ denotes a scalar velocity field defined over a one-dimensional spatial domain. The system state at time $t$ is given by

$$
s(t) = u(\cdot,t).
$$

Despite its relatively simple form, the inviscid Burgers equation exhibits nonlinear behavior, including the finite-time development of discontinuities – in the form of shock waves -- from smooth initial conditions. Here, $u(x,t)$ represents the local transport velocity, and the nonlinear advective term $u \partial_x u$ induces steepening of wave fronts. In the absence of viscosity, characteristics may intersect, leading to multivalued solutions and singularities, necessitating the introduction of weak solutions. Depending on the choice of initial condition, the system exhibits smooth evolution over finite time horizons or rapid shock formation, suggesting that sparse regression may struggle.

\begin{equation}\label{eq:inciscid_burgers_governing_viscocity}
\frac{\partial u}{\partial t} + u \frac{\partial u}{\partial x} = \nu \frac{\partial^2 u}{\partial x^2},
\end{equation}

Incorporating a viscosity term $\nu\frac{\partial^2 u}{\partial x^2}$ as in Eq.\eqref{eq:inciscid_burgers_governing_viscocity}, the wave fronts clash and steepen, developing shocks which later dissipate. The wave fronts dissipate at a scale that is dependent on the initial conditions.

\subsection*{Toy Example: Hopf-Fibration}

\begin{figure}[!htp]
    \centering
\includegraphics[width=\textwidth]{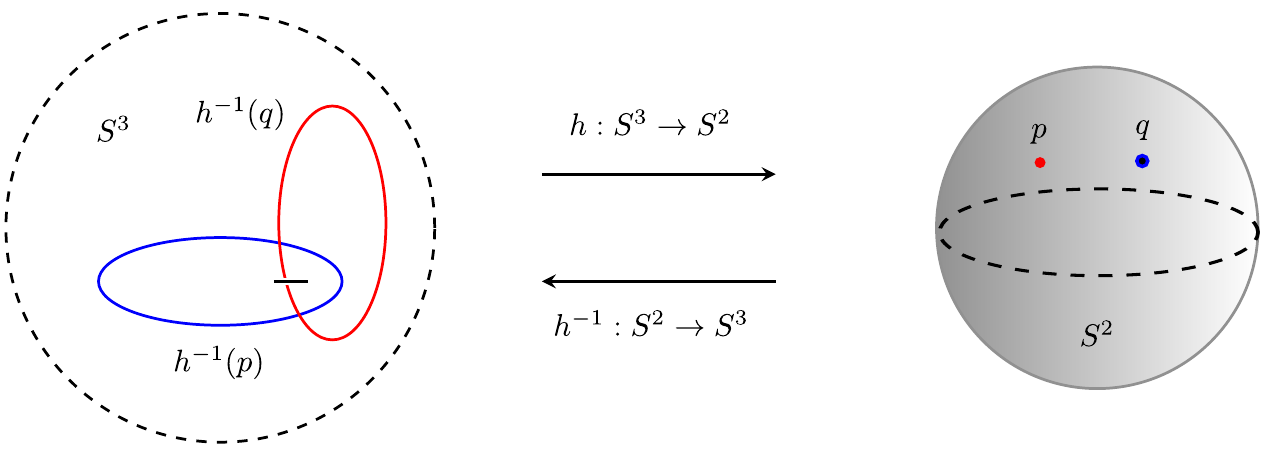}
\caption{
Left: The $3$-sphere $S^3$, which is a three-dimensional manifold sitting naturally in $\mathbb{R}^4$, with two distinct Hopf fibers $h^{-1}(p)$ and $h^{-1}(q)$, shown as linked circles inside a dashed boundary representing $S^3$. Any two distinct fibers of the Hopf fibration are linked once, illustrating the nontrivial topology of the fibration. Right: The $2$-sphere $S^2$, which is a two-dimensional manifold sitting naturally in $\mathbb{R}^3$ as the base space, with two points $p$ and $q$. Under the Hopf map $h: S^3 \to S^2$, each point in $S^2$ corresponds to a circle (fiber) in $S^3$.
    }
    \label{fig:hopf_schematic}
\end{figure}

To systematically evaluate KANDy’s ability to discover invariant structure in complex systems, we select the Hopf fibration as a representative topological example that serves as a potential pathological case for standard KANs. These cases were chosen because they each embody a distinctive class of mathematical invariance—quotient topology, fractal hierarchy, and extreme irregularity—offering a comprehensive testbed for our method. We show that KANDy can both identify and represent these invariances from data, demonstrating expressive power across diverse geometries and symmetries.

Figure~\ref{fig:hopf_schematic} illustrates the Hopf fibration, which encodes invariant structure arising from the quotient topology. To build geometric intuition, visualize $S^3$ as the set of points in four-dimensional space lying at a fixed distance from the origin. Imagine that every point on the familiar two-dimensional sphere $S^2$ can be associated with a continuous family of circles, each wrapping around in an unexpected way, such that every circle is linked with every other—a structure impossible to reproduce in ordinary three-dimensional space. The Hopf fibration organizes these circles so that $S^3$ is woven from interlocked loops, with each loop projecting down to a single point on $S^2$.

The Hopf fibration is a canonical example of a nontrivial fiber bundle $h : S^{3} \longrightarrow S^{2},$ in which the total space $S^{3}\subset \mathbb{C}^{2}$ fibers over the base space $S^{2}\subset \mathbb{R}^{3}$ with fiber $S^{1}$. Writing a point of $S^{3}$ as a pair of complex numbers $(z_{1},z_{2})\in\mathbb{C}^{2}$ satisfying $|z_{1}|^{2}+|z_{2}|^{2}=1$, the Hopf map is defined explicitly by
$$
h(z_{1},z_{2})
=
\bigl(
2\,\mathrm{Re}(z_{1}\overline{z}_{2}),
\;
2\,\mathrm{Im}(z_{1}\overline{z}_{2}),
\;
|z_{1}|^{2}-|z_{2}|^{2}
\bigr),
$$
which indeed satisfies $x^{2}+y^{2}+z^{2}=1$, hence $h(z_{1},z_{2})\in S^{2}$. The map $h$ is invariant under the free $S^{1}$-action
$$
(z_{1},z_{2}) \;\mapsto\; (e^{i\theta}z_{1},\,e^{i\theta}z_{2}),
\qquad \theta\in[0,2\pi),
$$
so that each fiber is an orbit of this action and is diffeomorphic to a circle:
$$
h^{-1}(p)\cong S^{1}\quad \text{for all } p\in S^{2}.
$$
Thus $S^{3}$ realizes a principal $S^{1}$-bundle over $S^{2}$, which is topologically nontrivial and cannot be written as a global product $S^{2}\times S^{1}$.

A key geometric feature of the Hopf fibration is that distinct fibers are linked in $S^{3}$: for any two distinct points $p,q\in S^{2}$, the preimages $h^{-1}(p)$ and $h^{-1}(q)$ form linked circles with linking number one. This linking encodes the nontrivial topology of the bundle and reflects the fact that $S^{2}$ arises as the quotient
$$
S^{2} \;\cong\; S^{3}/S^{1}.
$$
From the perspective of symmetry reduction, the Hopf fibration provides a geometric mechanism by which invariant structure is transferred from the quotient space $S^{2}$ back to the full space $S^{3}$. Functions or dynamics on $S^{3}$ that are invariant under the $S^{1}$-action factor through \(h\), i.e.,
$$
f(z_{1},z_{2}) = \tilde f\bigl(h(z_{1},z_{2})\bigr),
$$
for some function $\tilde f : S^{2}\to\mathbb{R}$. This decomposition illustrates how invariant coordinates arise naturally from quotient topology, providing a geometric foundation for discovering low-dimensional invariants in symmetric dynamical systems.

\section*{Methods}
\stepcounter{section}
\setcounter{equation}{0}

Next, we formalize the KANDy framework for learning governing equations of nonlinear dynamical systems using lifted zero-depth KANs. We first describe the architecture and lifting mechanism that augments state variables with physics-informed or ''lifted" nonlinear features. We then introduce the combined derivative and rollout training objective, which enforces both local vector field accuracy and global trajectory consistency. Then, we present the symbolic complexity-regularized edge selection procedure used to recover interpretable governing equations. Finally, we outline the chaos diagnostics—Lyapunov time, normalized rollout error, and correlation dimension—used to evaluate our results and analysis. Together, these components define a unified framework for equation discovery and long-horizon forecasting in chaotic systems.

\subsection*{Zero-depth Architecture, Learning Mechanism, Lifted Features}






\begin{figure}
    \centering
\includegraphics[width=\linewidth]{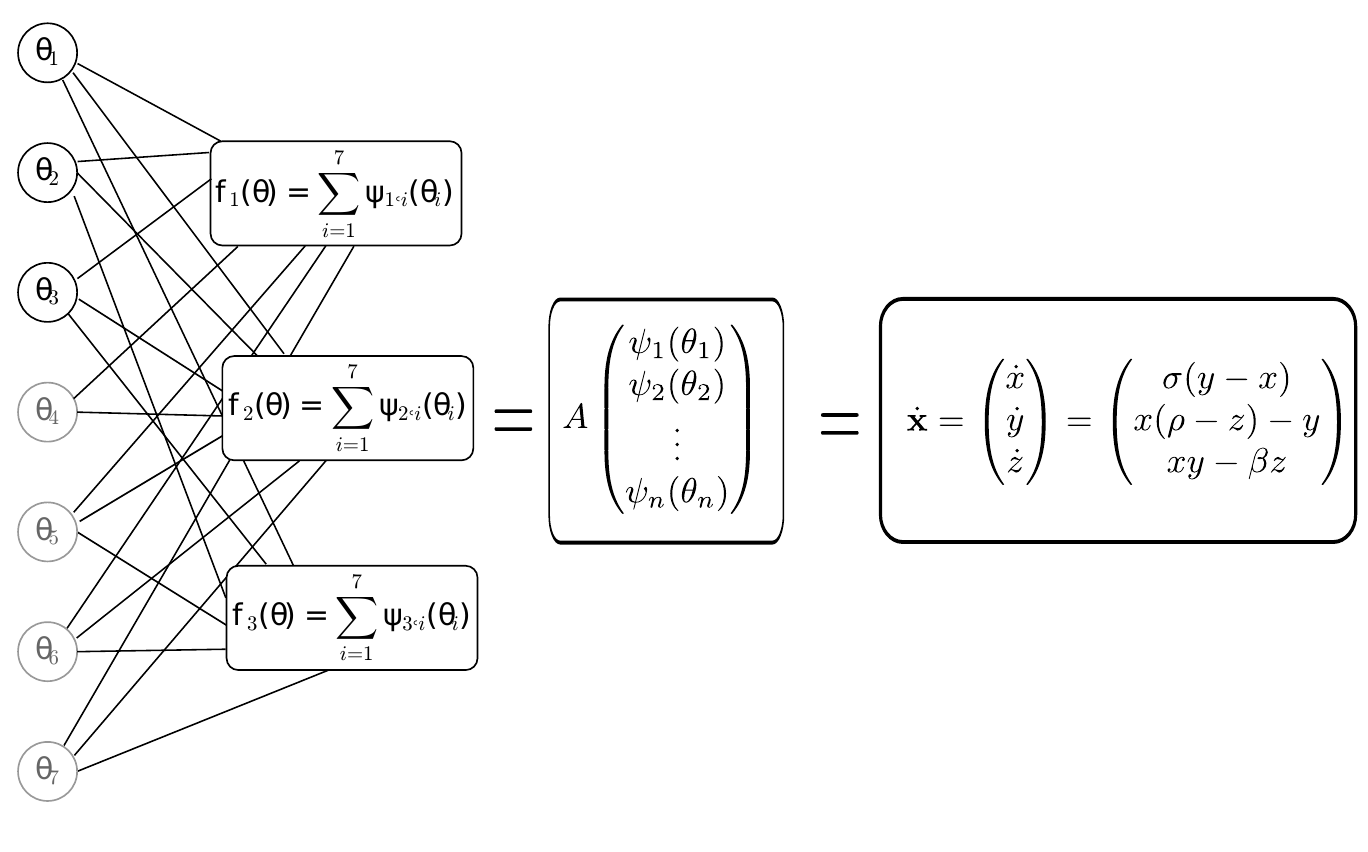}
    \caption{Zero-depth KAN architecture for the Lorenz system. Each input node corresponds to a state variable or lifted nonlinear library term ($x$, $y$, $z$, $xy$, $xz$), and each edge encodes a learned univariate spline activation $\psi_{i,j}$. The three output nodes represent the estimated time derivatives ($\dot{x}$, $\dot{y}$, $\dot{z}$). Because the network has no hidden layers, each output is a sum of independently learned univariate functions, which is equivalent to a generalized regression, making the recovered vector field directly interpretable as a symbolic governing equation. Negligible activations are pruned to zero, yielding the sparse symbolic expressions shown on the right.
    }
    \label{fig:zero-depth}
\end{figure}

We employ a Zero-depth KAN architecture as demonstrated applied to the Lorenz system (Figure~\ref{fig:zero-depth}) which is equivalent to a sparse-regression with an MLP, but implemented as a KAN. The key difference between our method and sparse optimization is that KANDy offers greater modelling capabilities. Reducing the degrees of freedom of the grid size and the number of spline knots, we observed that the slopes of the adaptive splines, which in this limited scenario were linear functions, approximately matched the $\beta$ weights learned by Lasso. In this way, our method generalizes sparse regression by leveraging the adaptability of KANs to apply nonlinear transformations to lifted dictionary features that are not specified a priori. Furthermore, it provides tighter bounds on the learning objective by allowing the KAN to learn the underlying vector field, which is not accessible in least-squares fitting

A typical learning mechanism follows the formula from control theory:
$$
\hat{f_\theta}(\mathbf{x})=\text{KAN}_\theta(\mathbf{x})\approx \frac{d\mathbf{x}}{dt}
$$
where the model consumes states as inputs and predicts the derivatives. The approach is similar to \citet{Bagrow2025,Koenig2024,Koenig2025}. However, our key differentiator to lift $\mathbf{x} \mapsto \Theta$ by some lifting function $\Phi$ includes the states and the lifted features.

\begin{equation}
\hat{f_\theta}(\Theta)=\text{KAN}_\theta(\Theta)\approx \frac{d\mathbf{x}}{dt}
\label{eq:kan_learning_objective}
\end{equation}

\begin{figure*}[!htp]
\centering
\begin{tikzcd}
X \arrow[rr,"f"] \arrow[d,"\Phi"'] & & TX \\
\tilde{X} \arrow[urr,"\mathrm{KAN}_\theta"']
\end{tikzcd}
\caption{The KANDy architecture is equivalent to a commutative diagram where the composition of the estimated KAN composes with the lifting map $\Phi$.}
\label{fig:commutative_diagram}
\end{figure*}

Equation~\eqref{eq:kan_learning_objective} shows the learning objective of the KANDy model. The function $\Phi$ maps the observables to a larger linearized space where the KAN can map to the vector field. The learning process is summarized in the commutative diagram seen in Figure~\ref{fig:commutative_diagram}. Commutative diagrams of this type are referred to as ``lifts" where there exists a factorization through a lifting map.
We find that in practice, $TX$ need not be a vector field, but our lifting method applies to more general topological spaces.

\subsection*{Lifted Differential Loss Function}

Training dynamics models by differentiating through an ODE solver is an established practice~\cite{chen2018neuralode}. However, since our model takes in the lifted features, each of those lifted features are calculated during training from integrator steps by factoring through the commutative diagram seen in ~\ref{fig:commutative_diagram}. Given the trajectories $\{\mathbf{x}^{(n)}(t)\}_{n=1}^N$, we define the model-predicted trajectory $\hat{\mathbf{x}}^{(n)}(t)$ as the solution of
$$
\frac{d\hat{\mathbf{x}}^{(n)}(t)}{dt}
=
f_\theta\!\big(\Phi(\hat{\mathbf{x}}^{(n)}(t))\big),
\qquad
\hat{\mathbf{x}}^{(n)}(t_0)=\mathbf{x}^{(n)}_0.
$$

First, we add trajectory-matching objective can be written as
$$
\mathcal{L}_{roll}(\theta)
=
\frac{1}{N}
\sum_{n=1}^N
\frac{1}{t_T - t_0}
\int_{t_0}^{t_T}
\left\|
\hat{\mathbf{x}}^{(n)}(t) - \mathbf{x}^{(n)}(t)
\right\|_2^2 \, dt.
$$

Because $\Theta = \Phi(\mathbf{x})$ includes nonlinear lifted terms, 
the lifting map $\Phi(\cdot)$ is evaluated on the current predicted state at every integration stage. This ensures that the lifted coordinates remain consistent with the evolving trajectory during training. Our training method  optimizes against both derivative supervision for local consistency of the vector field and state rollout supervision for global trajectory consistency. Derivative supervision targets $\dot{\mathbf{x}}_i$ with inputs $\Theta_i=\Phi(\mathbf{x}_i)$, then
$$
\mathcal{L}_{\mathrm{deriv}}(\theta)
=
\frac{1}{N}
\sum_{i=1}^{N}
\left\|
f_\theta(\Theta_i) - \dot{\mathbf{x}}_i
\right\|_2^2.
$$

The full training objective is

\begin{equation}
\mathcal{L}(\theta)=
\mathcal{L}_{\mathrm{deriv}}(\theta) +\lambda_{\mathrm{roll}}\,\mathcal{L}_{\mathrm{roll}}(\theta).
\label{eq:total_loss}
\end{equation}

Equation~\eqref{eq:total_loss} shows the total combination of both the lifted derivative loss and the model rollout loss. The model rollout loss is modified by an additional parameter $\lambda_{\mathrm{roll}}$ which is a training hyperparameters. Other training hyperparameters include, the type of integration to perform on the lifted features (e.g, Runge-Kutta, Euler, Rusanov which were implemented in our experiments, but expands to others).

\subsection*{Chaotic Reconstruction Measures}

The Lyapunov time $\tau_L$  characterizes the fundamental predictability horizon of a chaotic dynamical system. It is defined as the inverse of the largest Lyapunov exponent and represents the characteristic timescale over which trajectories that are initially nearby diverge by an order of magnitude. Beyond the Lyapunov time, small uncertainties in the initial conditions grow exponentially, rendering long-term predictions unreliable regardless of model accuracy. As a result, the Lyapunov time provides a theoretical upper bound on the performance of deterministic forecasting in chaotic systems and serves as a natural benchmark for evaluating predictive performance. The Lyapunov time is given by $\tau_L \;=\; \nicefrac{1}{\lambda_{\max}}$.

The quantity $\lambda_{\max}$ denotes the largest Lyapunov exponent of the system. It measures the average exponential rate at which infinitesimally close trajectories diverge in phase space, such that a small perturbation $\delta \mathbf{x}(0)$ grows as
$$
\lVert \delta \mathbf{x}(t) \rVert \sim \lVert \delta \mathbf{x}(0) \rVert e^{\lambda_{\max} t}.
$$
A positive value of $\lambda_{\max}$ is a defining signature of chaos, indicating sensitive dependence on initial conditions. Consequently, the Lyapunov time $\tau_L = 1/\lambda_{\max}$ sets the characteristic timescale over which predictions remain meaningful.

To quantify the temporal degradation of forecasts, we computed the cumulative normalized RMSE as a function of prediction horizon $t = h \Delta t$:
\begin{equation}
\mathrm{NRMSE}(t) = 
\frac{1}{\sqrt{N}}
\left( 
\frac{1}{t} \int_0^{t} 
\big\| \hat{\mathbf{x}}(\tau) - \mathbf{x}(\tau) \big\|_2^2 
\, d\tau
\right)^{1/2} \bigg/ 
\sigma_{\mathrm{data}},
\label{eq:nrmse}
\end{equation}
where $\hat{\mathbf{x}}(t)$ denotes the prediction of the model and $\sigma_{\mathrm{data}}$ is the root--mean--square of the true signal amplitudes, and $N$ is the dimension of the trajectories.

\subsection*{Function Fitting To Global Activations}\label{sec:complexity}

One of the core impediments we found in estimating governing equations with the standard KAN code implementation released by \citet{liu2025kan} is that fitting functions on edge splines only assesses fit quality on individual edges, making the fitted symbolic formula a local property of each edge. To overcome this, we implemented a custom symbolic formula extraction procedure that operates on the model activations. Most importantly, we found that setting a default fallback to zero on an edge when a function could not be fit improved symbolic extraction for dynamical systems. This zero fallback reduces the inclusion of poorly fitting or spurious functions, thereby decreasing noise in the extracted equations and enhancing both interpretability and sparsity. As a result, the symbolic formulas become cleaner and more aligned with the underlying system dynamics.

Our symbolic formula extraction from activation proceeds as follows; The KANDy model only has one wide layer with one input per lifted term, let $i$ be the index of the lifted term for $i=1,2,\cdots,n$ where $n$ is the total number of model inputs, and $j$ be the index of the output formula (e.g. in Figure~\ref{fig:zero-depth} the input feature $x$ corresponds to the index $i=1$ while the output target $\dot{x}$ corresponds to $j=1$). The KANDy model has no depth; however, we retain the index $l$ as the depth index for generalizability, even though $l=0$ in our implementation. For each edge $(l,i,j)$, we select a symbolic function from a predefined library. We then perform a greedy-edge-wise symbolic fitting.

For each KAN edge $(l,i,j)$, we select a symbolic function from a predefined library $\mathcal{L} = {f_1, f_2, \dots, f_K}$. We then perform a greedy edge-wise symbolic fitting. For each candidate function $f_k \in \mathcal{L}$, we fit $f_k$ to the learned spline activation $\phi_{l,i,j}$ and compute the coefficient of determination $R^2_{l,i,j,k}$. along with an associated complexity measure $c_k$ for each candidate. In our implementation, $c_k$ is an integer where the zero function corresponds to a complexity weight of $0$, and higher polynomial features have a complexity term related to the power of the polynomial, and all transcendental functions have a weight of $3$. The best candidate per edge is selected by maximizing a penalized score that balances fit quality against symbolic complexity:

\begin{equation}
S_{l,i,j} = R^2_{l,i,j} - w \cdot \frac{w_s}{1 - w_s} \cdot c_{l,i,j}
\end{equation}

where $w_s \in (0,1)$ is a weight controlling the preference for simpler expressions and $w > 0$ is an overall regularization strength.

We then apply a two-stage filtering and selection procedure. First, we discard all edges whose best-fit $R^2$ falls below a threshold $\tau$:

\begin{equation}
\mathcal{E}_{\text{eligible}} 
= \left\{ (l,i,j) \;\middle|\; R^2_{l,i,j} \geq \tau \right\}
\end{equation}

We next rank the eligible edges by their score $S_{l,i,j}$ in descending order and retain the top-$T$ edges, optionally subject to a total complexity budget $C_{\max}$:

\begin{equation}
\mathcal{E}_{\text{kept}} \subseteq \mathcal{E}_{\text{eligible}}, \quad |\mathcal{E}_{\text{kept}}| \leq T, \quad \sum{(l,i,j) \in \mathcal{E}_{\text{kept}}} c(l,i,j) \leq C_{\max}
\end{equation}

For each retained edge $(l,i,j) \in \mathcal{E}_{\text{kept}}$, we replace the learned spline activation $\phi(l,i,j)$ with its best-fit symbolic function. All remaining edges not in $\mathcal{E}_{\text{kept}}$ are set to the zero function, effectively pruning them from the network. The resulting model fully captured the symbolic formulas for the dynamical systems we studied. This procedure can be viewed as a complexity-regularized symbolic selection procedure over a discrete symbolic library, while enforcing an explicit goodness-of-fit constraint. Symbols with higher intrinsic complexity must achieve proportionally better predictive accuracy to be selected, and edges that fail to meet the threshold are pruned entirely. 

\subsection*{KANDy}

The Kolmogorov-Arnold Representation Theorem says that for any continuous function $f(x_1,\, x_2, \cdots,\,x_n)$ there is a decomposition that makes this multivariate function the sum of continuous functions applied to sums of univariate functions such that

$$
f(x_1,\, x_2,, \cdots,\,x_n) = \sum^{2n+1}_{q=1}\Phi_q(\sum^{n}_{i=1}\phi_{q,i}(x_n)\big).
$$

\citet{liu2025kan} generalizes the original theorem to arbitrary widths and depths and contextualizes it in today’s deep learning world. However, there is no known purely mathematical generalization of this theorem that expands to compositions of continuous functions that is presently known \cite{liu2024kan2}. E.g., for another continuous function $g$,  whether the composition $f \circ g$ has a KAT decomposition; however, the following relation
$$
f \circ g (x_1,\, x_2,, \cdots,\,x_n) = f( g(x_1,\, x_2,, \cdots,\,x_n)).
$$
For example, $f(x,y)=xy=e^{\log(x) + \log(y)}$ is one realization of the KAT. Choosing $g(x,y)=0$ , another continuous function, breaks this particular realization of KAT breaks. In other words, there is no known theorem guaranteeing that a KAT decomposition of $f$ can be systematically extended or adapted to cover $f\circ g$.

This complicated the KAN formulation because each composition represents a layer of the KAN neural network. The original KAN code provides methods to avoid singularities, but the presence of singularities undermines the idea of a true generalization of this theorem and poses complications for dynamical systems.
d
Instead, we choose to formulate this problem by combining aspects of Koopman operator theory and treating the KAN as a regression. This yields the formula below
$$
f(x_1,\, x_2,, \cdots,\,x_n) = \sum^{m}_{q=1}\psi_q(x_n)
$$
where $m$ is the dimension of the embedding of the function into a Koopman-invariant subspace where the dynamics become approximately linear. This formulation improves on SiNDy because the choice of dictionary with sparsity conditions becomes relaxed and the model learns the $\psi_q$ for $q=1,2,\dots,m$ that adjust the dictionary to become approximately linear.

Further complications for the KAT $\mapsto$ KAN analogy are that network layers and neural network architectures allow for outputs of arbitrary size. For example, consider the Lorenz system:

$$  
\dot{\mathbf{x}} =  
\begin{pmatrix}  
\dot{x} \\  
\dot{y} \\  
\dot{z}  
\end{pmatrix}  
=  \begin{pmatrix}  
\sigma (y - x) \\  
x(\rho - z) - y \\  
xy - \beta z  
\end{pmatrix}.  
$$  
  
The state vector is:  
$$  
\mathbf{x}(t) =  
\begin{pmatrix}  
x(t) \\  
y(t) \\  
z(t)  
\end{pmatrix}.  
$$  
  
This can be written compactly as:  
  
$$  
\dot{\mathbf{x}} = f(\mathbf{x}), \qquad  
f : \mathbb{R}^3 \to \mathbb{R}^3.  
$$

If we write  
  
$$  
f(\mathbf{x}) =  
\begin{pmatrix}  
f_1(\mathbf{x}) \\  
f_2(\mathbf{x}) \\  
f_3(\mathbf{x})  
\end{pmatrix},  
$$  
then each component function is  
  
$$  
\begin{aligned}  
f_1(x,y,z) &= \sigma (y - x), \\  
f_2(x,y,z) &= x(\rho - z) - y, \\  
f_3(x,y,z) &= xy - \beta z.  
\end{aligned}  
$$
The codomain of this function is $\mathbb{R}^3$, and in the traditional interpretation of KAT, the codomain is $\mathbb{R}$. To see why this presents problems for the discovery of governing equations of dynamical systems, observe that the decompositions of $f_1$,$f_2$,$f_3$ are learned independently with no guarantee that the shared variables. This becomes particularly evident when examining a component with a nonlinear term; consider $f_2$ or $f_3$ with a nonlinear component and, without loss of generality, consider $f_3$.

Then $f_3 = xy - \beta z$ and applying KAT to this function, there exists a decomposition of continuous functions $h_{i,j},k_{i,j}$ for $i=1,2,\cdots 7$,  such that  

$$f_3 = xy - \beta z = \sum_{i=1}^7h_i(k_{1,i}(x) + k_{2,i}(y) + k_{3,i}(z)).
$$

Since this decomposition holds for all $x,y, z$, let $z=0$. 

Now consider that the inputs to the KAN are the states $x$, $y$, and $z$. For this approximation to occur, there must be at least one neuron that multiplies $x$ and $y$, and we will show the bilinear term $xy$ does not admit a 1D spline approximation. For this to occur there must be continuous functions of $h,u,v:[0,1]\mapsto \mathbb{R}$ such that $f(x,y) = xy = h(u(x) + v(y))$ where this representation is less than the terms provided by the KAT theorem and this neuron must be present in the KAN representation to successfully model this term in the $f_3$ output of the KAN. Theorem~\ref{thm:single_neuron} shows that a single KAN neuron is incapable of modeling this term.

For simplicity, we assume that the neuron splines are modelled as polynomials $c,\, x,\, x^2,\, \dots,\, x^n$ for some $n\in\mathbb{Z}$. The justification of this choice is that a) no non-polynomial terms appear in $f_1$, $f_2$, or $f_3$, and b) the edge-wise trade-off of function fit quality $R^2$ and complexity score $C_k$, for $k=1,2,3$ in this case, and d) the introduction of transcendental function and the inverse function of said transcendental functions requied to recover terms blows up in terms of complexty and introduces singularities and possibly an explosion in the number of neurons requried.

Introducing these constraints enables the accurate determination of the limitations of KANs in approximating the equations of the Lorenz system. For example, introducing arbitrary depths with no hidden width yields a similar impossibility for the Lorenz system (Theorem~\ref{thm:deep_kan_xy}), thus requiring additional hidden width, e.g., the network must be at least $[3, 3, \dots, 3]$. \citet{liu2025kan} shows that a KAN models $xy$ by allowing for hidden square neurons and pass through neurons $\pm $ $h^2$ because $(x+y)^2 - (x^2 + y^2) = 2xy$. The derived network in this case provides clues as to why Koopman lifting is required for the Lorenz system. Recovering the nonlinear terms requires the introduction of additional square terms, a result we formalize in Theorem~\ref{prop:quadratic_obstruction}.

The obstruction for width-$3$ KANs is subtler than in the width-$1$ case. Indeed, a single bilinear term can be realized by polarization $xy=\frac14\big((x+y)^2-(x-y)^2\big)$. Thus, a width-$3$ KAN with a monomial dictionary can represent $xy$ exactly. However, the full Lorenz system requires two independent bilinear terms, namely, $xy$ and $xz$, and this simultaneous requirement leads to a genuine rank obstruction. Thus, recovering the governing equations from a deep KAN with assumed affine polynomial edge functions introduces an algebraic obstruction, which we formalize in Theorem~\ref{thm:inductive_width3}.

\section*{Results}

In this section, we evaluate KANDy across discrete and continuous dynamical systems, including the H\'enon and Lorenz systems, as well as chaotic PDE benchmarks, e.g., the KS equation and the Inviscid Burgers equation. We begin with our toy example of the Hopf fibration as a canonical nonlinear quotient map, demonstrating the model’s ability to learn nontrivial fiber-bundle structure and recover symbolic invariants.

\stepcounter{section}
\setcounter{equation}{0}

\subsection*{Hopf-Fibration}

Symmetry plays a central role in the mathematical structure of physical laws and dynamical systems, and group actions encoding symmetry arise naturally in systems ranging from classical mechanics and field theory to fluid dynamics and pattern-forming partial differential equations. These symmetries introduce redundancy in the state description: multiple configurations correspond to the same physical state. As a result, meaningful observables and reduced models are most naturally defined on quotient spaces obtained by identifying symmetry-related states~\cite{kiani2024hardness, perin2025ability}.

Classical approaches to symmetry reduction rely on analytical insight into the governing equations, leading to invariant variables, conserved quantities, or reduced coordinates. In data-driven settings, however, such structures may be unknown or difficult to derive explicitly. While recent advances in equivariant and invariant neural networks enable the incorporation of known symmetries into model architectures, they do not address the inverse problem of learning the invariants themselves from data.

\begin{figure*}[t]
\vskip 0.15in
\includegraphics[width=0.9\textwidth]{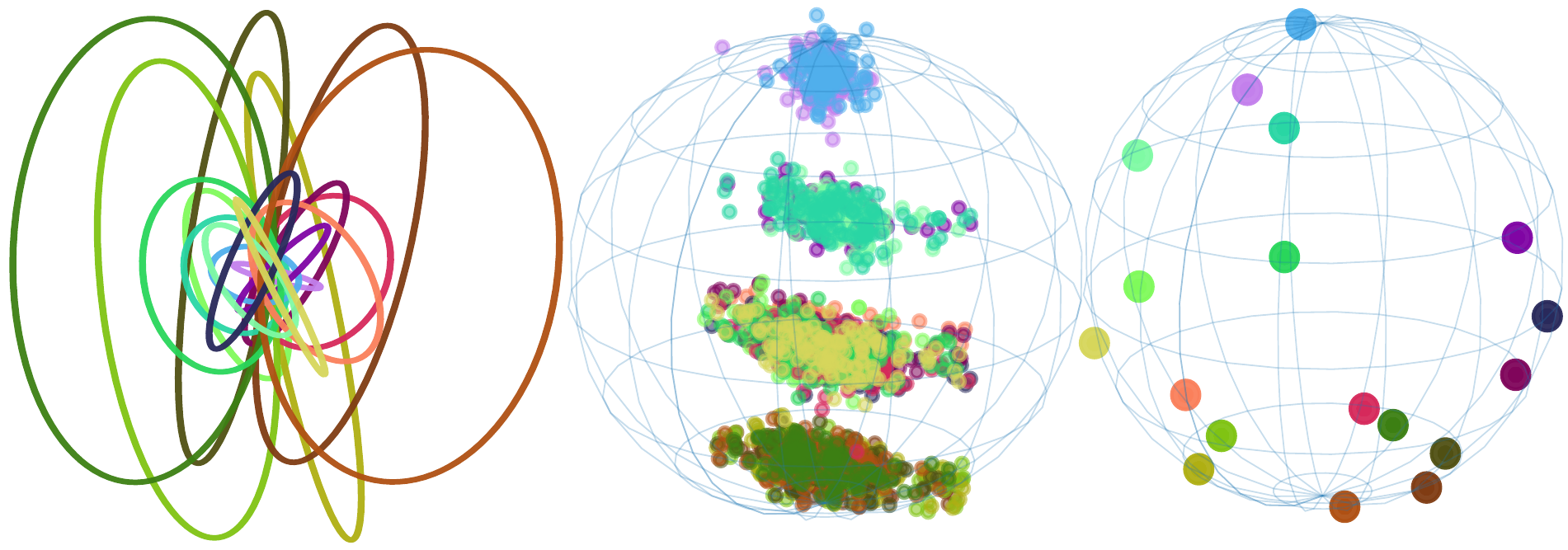}
\caption{
Left: Fibers of the Hopf fibration visualized by stereographic projection of $S^3$ into $\mathbb{R}^3$. Each closed curve corresponds to a $U(1)$ orbit under the group action $(z_1,z_2)\mapsto(e^{i\theta}z_1,e^{i\theta}z_2)$ and each closed curve is colored. Center: a deep KAN of two hidden layers, each with four terms. The learned embedding model scatters the fibres on the sphere.  Right: Outputs of a trained KANDy model evaluated along the same fibers. Fibres of the closed curve are colored (left) correspond to their respective fibres (center and right).
}
\label{fig:hopf_main}
\vskip -0.1in
\end{figure*}

We begin by studying a canonical geometric example: the Hopf fibration. The Hopf map realizes the quotient of the three-sphere $S^3$ by a $U(1)$ action, producing the two-sphere $S^2$. Learning this map requires capturing a nontrivial fiber bundle structure and enforcing invariance under group orbits. We show that KANDy can learn the Hopf quotient map from data, preserve fiber constancy, and admit explicit symbolic representations that align with the underlying invariant theory, whereas a typical KAN may struggle to do so. Figure~\ref{fig:hopf_main} shows both KANDy and a KAN applied to the Hopf-fibration. The standard KAN model collapsed the fibres of the orbits in the Hopf fibration along the azimuth direction. The learned representation collapses entire fibers to single points on $S^2$, demonstrating invariance along group orbits. Both models were fit using a modified radial basis function $e^{x^2}$ as the default spline with a grid of 64 with 7 knots per spline.

The KAN had inputs $x_1, x_2, \dots x_4$ with three layers each 4 summand terms wide, and after training yielded a mean angular error $1.02$, the upper 95-th percentile radial error of $2.09$, mean fiber RMS=$0.124$ and a maximum fibre error of $0.147e$. On the other hand, KANDy had lifted inputs $(x_1x_3, x_2x_4, x_2x_3, x_1x_4, x_1^2 + x_2^2 - x_3^2 - x_4^2) \in \mathbb{R}^5$ making a higher dimensional lift using graded component, which after training gave a mean angular error of $1.80e-05$,  the upper 95-th percentile radial error of $0.0$, mean fiber RMS=$3.13e-07$ and a maximum fibre error of $4.77e-07)$.

\begin{equation}
\phi_1 \approx 2.0715\times10^{-4}, \qquad
\phi_2 \approx 3.9872\times10^{-3}, \qquad
\phi_3 \approx 8.6808\times10^{-3}.
\label{eq:kan_raw_4d}
\end{equation}

Equation~\eqref{eq:kan_raw_4d} shows the learned formula with the typical formula loss function and composition. For a KAN of width $[4,4,4,3]$, the symbolic extraction identifies several edge-wise nonlinearities in the early layers. In particular, multiple input-to-hidden edges in the first hidden layer are well approximated by cosine functions (e.g., $R^2 = 0.9986, 0.9978, 0.9964$), and additional hidden-to-hidden edges are fit by sine and cosine functions with coefficients of determination up to $R^2 = 0.9990$ and complexity $c = 2$. An intermediate hidden-to-hidden edge is approximated by a linear function ($x$) with $R^2 = 0.9657$ and $c = 1$. Symbolic extraction was performed with a simplification weight of $0.8$ and an $R^2$ threshold of zero. All of which created a zeroed-out final function composition with only constant terms. These experiments were performed with the default symbolic discovery routine with default parameters from the PyKAN library~\cite{liu2025kan}. The discovered formula~\eqref{eq:kan_raw_4d}, which is linear, confirms the model predictions of the standard deep-KAN shown as the center plot in Figure~\ref{fig:hopf_main}, where the fibers appear evenly scattered but collapsed in the azimuth direction corresponding to the order of magnitude reduction in $\phi_1$.
\begin{equation}
\begin{cases}
\phi_1 \approx 0.4077\,x_1x_3 + 0.4078\,x_2x_4 + 0.00139 \\
\phi_2 \approx 0.4082\,x_2x_3 - 0.4100\,x_1x_4 + 0.00854 \\
\phi_3 \approx 0.5768\,(x_1^2 + x_2^2 - x_3^2 - x_4^2) - 0.00278 
\end{cases}
\label{eq:kan_phi_5d}
\end{equation}

Eq.~\eqref{eq:kan_phi_5d} shows the simplified symbolic expressions extracted from KANDy. The model was trained with the typical MSE loss function, as there is no derivative or dynamics, and the composition of the same Hopf-fibration under the KANDy assumptions of additional non-linear lifted homogeneous terms. The symbolic extraction identifies predominantly polynomial edge-wise non-linearities across the first layer. In particular, multiple input-to-hidden edges are accurately approximated by linear ($x$) and quadratic ($x^2$) functions introducing additional nonlinearities for already nonlinear inputs, with $R^2 \approx 0.99999$–$1.00000$. Linear edges correspond to default complexity $c=1$, while $x^2$ are selected with default complexity $c=2$, indicating that second-order polynomial structure is sufficient to represent the learned transformations. These edge-wise polynomial mappings compose through the network to produce the final symbolic expressions, which are dominated by linear combinations of the input variables with small quadratic correction terms. Symbolic extraction was performed with default settings, using a simplification weight of $0.8$ and an $R^2$ threshold of $0$, yielding a compact yet high-fidelity representation of the learned mapping. The presence of quadratic and linear terms in the hidden layers indicates that the model is learning a quadratic form on the manifold of activations, which explains why KANs with multiplication layers may fail. 

Figure~\ref{fig:hopf_main}, Equations~\eqref{eq:kan_raw_4d} and \eqref{eq:kan_phi_5d}, and subsequent analysis illustrates that KANDy learns a nontrivial topological properties from data juxtaposed with standard KANs, showing that depth alone is insufficient for some non-linear systems. Motivated by this geometric result, we turn to dynamical systems and partial differential equations with continuous symmetries. The Kuramoto–Sivashinsky equation exhibits translation symmetry and chaotic dynamics on a quotient space, while Burgers’ equation possesses Galilean invariance. The Lorenz system, though finite-dimensional, exhibits a discrete symmetry that yields invariant polynomial coordinates. In each case, we frame the learning task as the discovery of a map from the original state space lifted to a larger, possibly infinite dimensional, space, symmetry-invariant representation.

\subsection*{Discrete Dynamical Systems}

\begin{figure}[!htpb]
  \centering
  \includegraphics{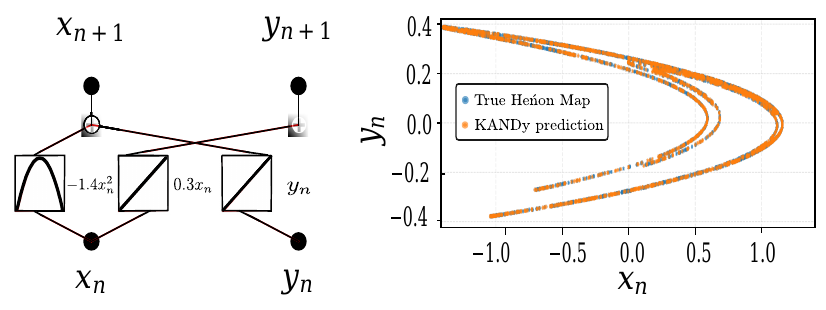}
  \caption{Left: KANDy architecture for the Henon map, which recovers the governing equations. Right: the true attractor overlaid with the KANDy learned attractor.}
  \label{fig:henon}
\end{figure}

We next consider the Hénon map as a canonical discrete-time chaotic system with known governing equations. Fig.~\ref{fig:henon}, the learned KANDy accurately reproduces the characteristic folded structure of the Hénon attractor, demonstrating that even shallow architectures suffice for discrete-time systems. The KANDy network plot reveals that the learned univariate activation functions align with the functional form of the true Hénon map. The model was fit with a radial basis function, had a grid size of $5$ with $3$ spline knots. The model reduced several orders of magnitude in both training and testing loss. 

\begin{equation}
\begin{cases}
x_{n+1} = -1.4\,x_n^{2} + 1.0\,y_n + 1.0 \\
y_{n+1} = 0.3\,x_n
\end{cases}
\label{eq:henon}
\end{equation}
The learned governing equations (Eq.\eqref{eq:henon}) match the true parameters of the Henon system with 100\% fidelity. A similar result appears in \citet{Panahi2025KANModelDiscovery}; however, the estimated formula is not optimized for statistical attractor properties and is reported as a standard KAN, and this formulation lacks lifted features. However, the KANDy method successfully identified the governing equations of the Ikeda Optical Cavity model, which is a known edge case for sparse regression techniques and is not successfully captured in \citet{Panahi2025KANModelDiscovery}. For brevity, this experiment is omitted; the full experiment is available in the supplementary material and on GitHub.\footnote{\href{https://github.com/Center-For-Complex-Systems-Science/kandy/blob/main/examples/ikeda_example.py}{Full experiment}}

\subsection*{Continuous Dynamical Systems}

Continuous-time dynamical systems provide a natural testbed for evaluating whether a learned model can recover underlying governing equations while preserving long-horizon qualitative behavior. Unlike discrete mappings, continuous systems require the model to remain consistent under numerical integration and to respect stability, dissipation, and invariant structures such as attractors. In this section, we focus on benchmark systems defined by ordinary differential equations, using them to assess both the accuracy of learned vector fields and the fidelity of the resulting trajectories.


The Lorenz system is a canonical example of a low-dimensional chaotic dynamical system that serves as a benchmark for model discovery. Accurately recovering the Lorenz dynamics requires capturing both the local vector field and the global structure of the strange attractor. The KANDy architecture successfully captures both. As such, performance on the Lorenz system provides insight into the model’s ability to generalize beyond training trajectories, maintain stability under rollout, and recover interpretable governing equations consistent with known physics.

\begin{figure*}[!htbp]
  \centering
  \includegraphics[width=1.0\linewidth]{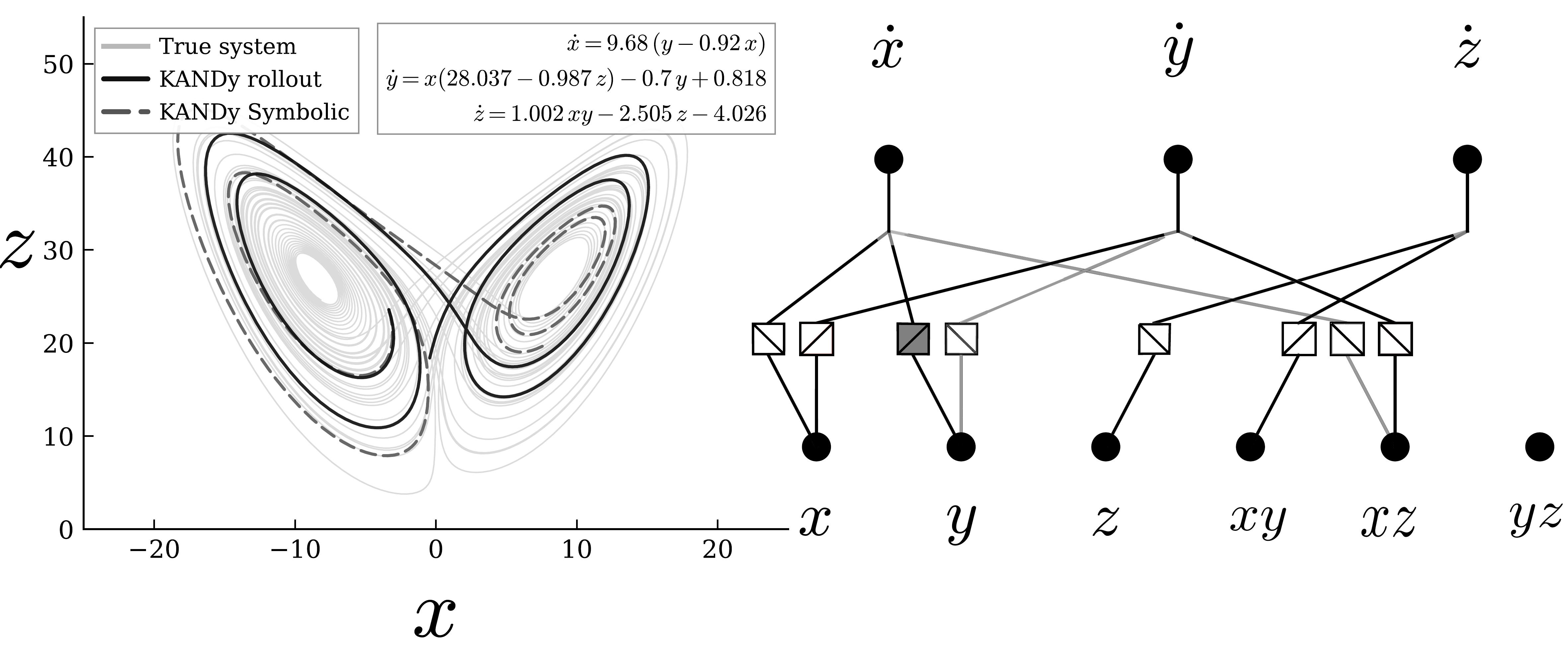}
  \caption{Left: a trajectory learned by KANDy (solid grey) and the trajectory estimated from the learned governing equations (dotted grey) with the learned governing equation in the legend overlayed on the true attractor. Right: the KANDy architecture, with idealized representations of the learned activations of the model applied to non-linear lifted terms.
  }
  \label{fig:lorenz}
\end{figure*}

Figure~\ref{fig:lorenz} shows a learned trajectory from the model as well as the learned governing equation overlaid on the true attractor. The model was trained on the classic Lorenz parameters $\sigma=10.0$, $\rho=28.0$, and $\beta=\nicefrac{8.0}{3.0}$ over time of $50$ steps with $dt=0.005$, translating to 10,000 samples with a burn-in period of 2. The model was trained for 300 epochs with a 4th-order Runge-Kutta integrator, integrating 10 steps ahead, with subsequent grid updates every 50 steps up to 300 steps, using a learning rate of $10^{-3}$. The default spline function instantiating the model is a modified radial basis function $e^{-3x^2}$ which was chosen because of the difficulty fitting the linear damping term $\nicefrac{dy}{dt}=-y$ which often required longer training horizons as well as more ``spread-out" spline initializations. The model parameters included a width of 6 inputs (one for each term) and 3 outputs representing the KANDy architecture with a value of spline knots $k=1$ and a grid-size of 7. The training set initialized with a loss of 1828.79 and finished training with a loss of 0.47. For the out-of-training test set the training started at a loss of  1765.44 and completed at 0.45. The rolling integrator loss began training at 2.53 and completed on 0.02. In terms of order of magnitude, the training and out-of-training test losses were each reduced by about four orders of magnitude, while the rolling integrator loss was reduced by about two orders of magnitude. All losses were calculated as the MSE loss.

\begin{figure*}[!htbp]
  \centering
  \includegraphics[width=\linewidth]{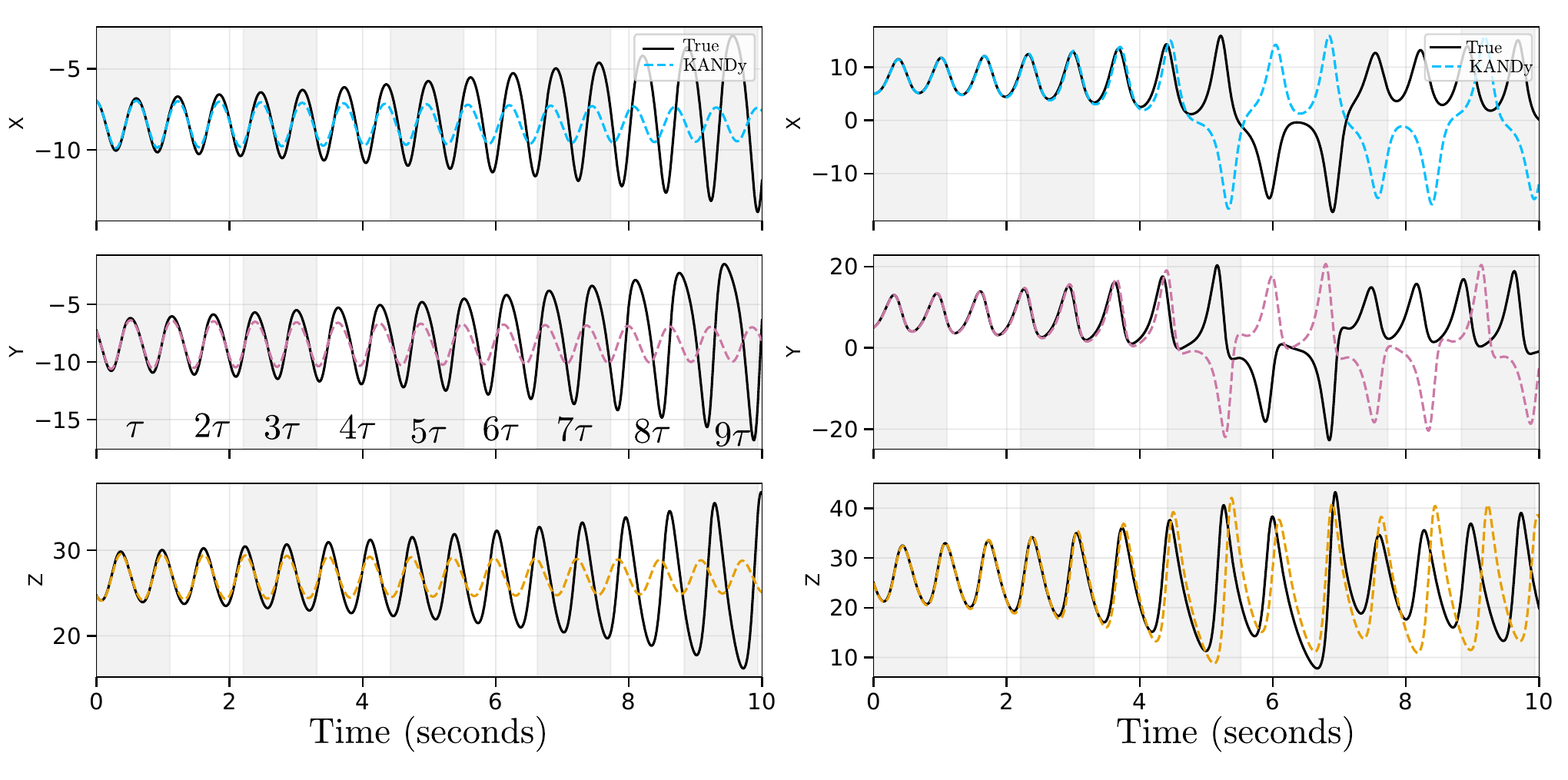}
  \caption{Left: KANDy rollout plotted against Lyapunov for in-training trajectories. Right: KANDy rollout with an out-of-training distribution as the initial condition. The KANDy rollout remains in both phase and amplitude coherence with the attractor for 5 Lyapunov times, $\tau$.}
  \label{fig:lorenz_rollout_in_lyanpunov}
\end{figure*}

Figure~\ref{fig:lorenz_rollout_in_lyanpunov} shows the KANDy rollout plotted against Lyapunov for in-training trajectories with initial condition $(0.0, 0.0, 0.0)$ and  on an out-of-training distribution initial condition $(5.0,\, -25.0,\, 1.0)$. The KANDy rollout stays in synchrony with the attractor for 5 Lyapunov times $\tau$.

To assess generalization, we perform few-shot rollouts from initial conditions sampled outside the training distribution but still within the basin of attraction. As shown in Fig.~\ref{fig:lorenz}, the KAN successfully converges to the correct attractor after a transient phase, despite not having observed these initial conditions during training. This behavior highlights that the learned model captures the attractor manifold itself, rather than a narrow subset of trajectories.

\begin{figure*}[!htp]
    \centering
    \includegraphics[width=1.0\linewidth]{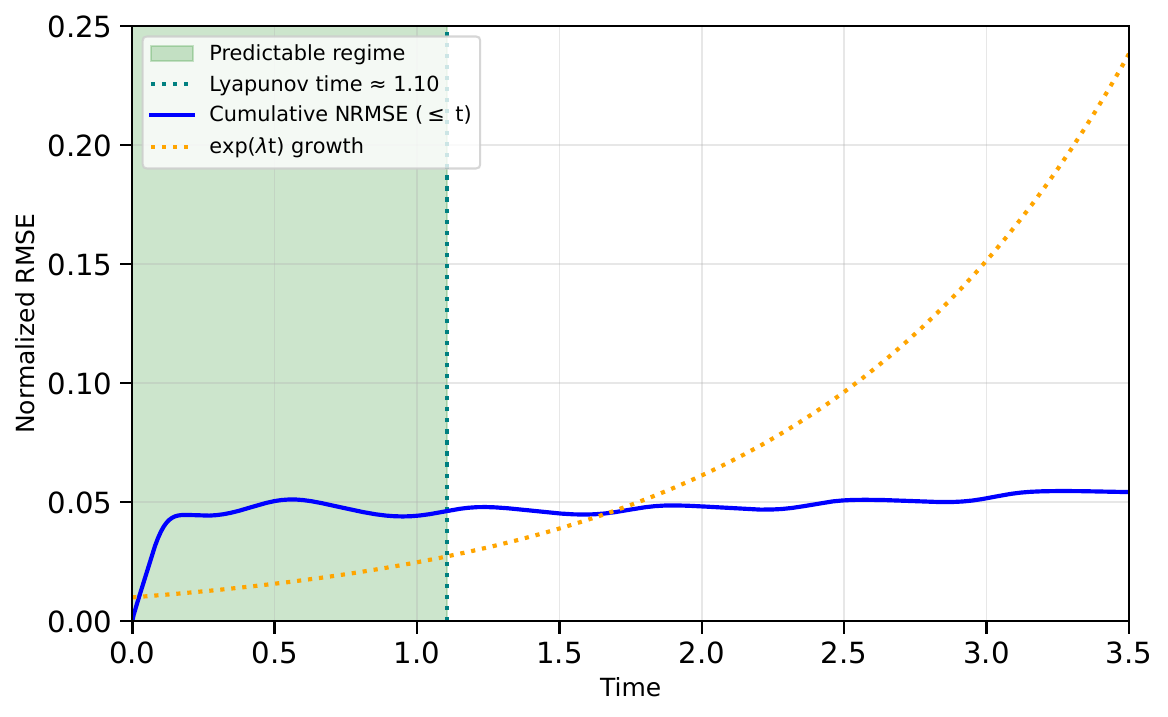}
    \caption{The NRMSE for one out-of-distribution initial condition. The period before one Lyapunov time passing colored in green shows that the KANDy model alone is sufficient to approximate the vector field within the predictability limits.}
    \label{fig:lyapunov_curve}
\end{figure*}

Figure~\ref{fig:lyapunov_curve} shows NRMSE for one out-of-distribution initial condition, e.g. $(25.0,\, 5.0,\, 4.0)$. The period before one Lyapunov time passing colored in green shows that the KANDy model alone is sufficient to approximate the vector field within the predictability limits, and shows the resulting error curve (solid blue) together with the Lyapunov time $\tau_L$ (dotted vertical line).  The normalized error remains below $0.1$ for $t < 0.8$, begins to grow rapidly near $t \approx 1.0$, and crosses $0.4$--$0.5$ by $t \approx 2.0$.  This transition is closely aligned with the theoretical Lyapunov time $\tau_L$, confirming that the predictability horizon of the model is fundamentally limited by the chaotic divergence rate of the underlying dynamics.

The combined qualitative and quantitative results demonstrate that the trained model achieves accurate short--term trajectory forecasting up to roughly one Lyapunov time, beyond which exponential divergence in phase leads to decorrelation while preserving the attractor geometry.  This outcome is consistent with the theoretical limit of deterministic predictability for chaotic systems, validating the model’s fidelity to the true Lorenz dynamics.

Expanding the right-hand sides of the classic Lorenz equations~\eqref{eq:lorenz} makes the parameter-to-coefficient mapping explicit:
\begin{equation}
\dot{x} = -\sigma x + \sigma y,\qquad
\dot{y} = -xz + \rho x - 1\cdot y,\qquad
\dot{z} = 1\cdot xy - \beta z.
\label{eq:expanded}
\end{equation}

The estimated governing equations of the Lorenz system were 

\begin{equation}
\begin{aligned}
\dot{x} &= -8.855\,x + 9.646\,y, \\
\dot{y} &= -0.972\,x z + 27.378\,x - 0.7\,y + 0.178, \\
\dot{z} &= 1.0\,x y - 2.702\,z + 0.859.
\label{eq:estimated}
\end{aligned}
\end{equation}

Comparing Eq~\eqref{eq:estimated} term-by-term with Eq~\eqref{eq:expanded} gives the implied Lorenz parameters (and highlights deviations from the canonical structure): In Eq~\eqref{eq:expanded}, the coefficients of $x$ and $y$ in $\dot{x}$ must be $-\sigma$ and $+\sigma$, i.e.\ equal magnitude and opposite sign. The model \eqref{eq:estimated} instead yields $\sigma_x \approx 8.855$ (from $-8.855\,x)$, $\sigma_y \approx 9.646$ (from  $+9.646\,y$).

A natural single estimate is their average, $\hat{\sigma}\approx (8.855+9.646)/2 = 9.2505$. The internal consistency error for the Lorenz form is the mismatch $\Delta\sigma := 9.646 - 8.855 = 0.791$, which corresponds to a relative discrepancy of about $\nicefrac{\Delta\sigma}{\hat{\sigma}} \approx \nicefrac{0.791}{9.2505} \approx 8.6\%$. Thus, the inferred $\sigma$ is close to the classical value $\sigma=10$ (absolute error $\approx 0.75$ if using $\hat{\sigma}$, i.e.\ $\approx 7.5\%$ relative), but the two linear coefficients in $\dot{x}$ do not satisfy the exact Lorenz constraint of equality.

In \eqref{eq:expanded}, the coefficient multiplying $x$ in $\dot{y}$ is $\rho$. The estimate gives $27.378\,x$, hence $\hat{\rho}\approx 27.378.$ Compared to the common chaotic parameter $\rho=28$, the absolute error is $|\hat{\rho}-28| = 0.622$, which is a relative error of $\nicefrac{0.622}{28}\approx 2.2\%$. This is well within a few percent tolerance used to judge the recovery of Lorenz parameters from noisy data.

In \eqref{eq:expanded}, the linear damping coefficient in $\dot{z}$ is $-\beta$. The estimate gives $-2.702\,z$, so $\hat{\beta}\approx 2.702$. Relative to the classical $\beta=8/3\approx 2.6667$, the absolute error is $|\hat{\beta}-8/3| \approx 0.0353$, and the relative error is $ \nicefrac{0.0353}{2.6667}\approx 1.3\%$. This indicates an excellent match.

The Lorenz model requires coefficients $-1$ for $xz$ in $\dot{y}$ and $+1$ for $xy$ in $\dot{z}$. The estimate provides $-0.972\,xz$ and $1.0\, xy$: $\text{coeff}(xz \text{ in } \dot{y}) = -0.972$ which is a 2.8\%  deviation from $-1$, coeff($xy$ in  $\dot{z}) = 1.0$ implying 0\% deviation from $1$ (to reported precision). Hence, the recovered quadratic coupling is very close to the canonical Lorenz structure.

In \eqref{eq:expanded}, the $y$-coefficient in $\dot{y}$ is exactly $-1$.
The estimate instead gives $-0.7\,y$, which is a substantial deviation: $|-0.7 - (-1)| = 0.30$ which is a 30.0\% relative error in that coefficient.

In addition, the estimated system includes constant forcing terms $+0.818$ in $\dot{y}$ and $-4.026$ in $\dot{z}$, whereas the standard Lorenz--63 system has no constant terms. Such offsets commonly arise from (i) a nonzero mean in the data if the variables were not centered, (ii) model-form mismatch absorbed by lower-order terms during regression, or (iii) bias due to noise/finite data and regularization. If the true underlying dynamics are Lorenz--63, then subtracting empirical means (or including a constraint that eliminates constants) would typically drive these constants toward zero.

If one adopts a tolerance of, say, $\le 5\%$ relative error for identifying Lorenz parameters from data, then the estimates for $\rho$ ($\sim2.2\%$), $\beta$ ($\sim1.3\%$), and the quadratic coefficients (0--2.8\%) are comfortably within tolerance, while $\sigma$ is near but slightly outside depending on how it is inferred (the two $\dot{x}$ coefficients disagree by $\sim 8.6\%$), and the linear damping in $\dot{y}$ is clearly outside tolerance (about $50\%$). Therefore, the learned model largely recovers the key Lorenz couplings and parameters associated with chaotic behavior (notably $\rho$ and $\beta$ and the $xz,xy$ terms), but it does not perfectly reproduce the exact Lorenz linear structure, suggesting either estimation bias, insufficient/uncentered data, or that the fitted system represents a close-but-not-identical dynamical system in the Lorenz family.


\subsection*{Partial Differential Equations}

Next, PDEs are a substantially more challenging class of chaotic systems, as they describe the evolution of spatiotemporal fields governed by both local interactions and global constraints. In contrast to ordinary differential equations, PDEs involve infinite dimensionality with many degrees of freedom. Successfully learning PDE dynamics, therefore, requires models to capture not only temporal evolution but also spatial structure, locality, and invariances arising from the underlying physics. In this section, we consider benchmark PDE systems to evaluate KANDy's ability to recover governing equations.

\subsubsection*{KS-Equation}

The KS equation is a prototypical nonlinear PDE that exhibits spatiotemporal chaos arising from the interaction of instability, dispersion, and dissipation. Despite its relatively simple analytic form, the KS equation exhibits complex dynamics, including chaotic patterns, broadband spectra, and sensitive dependence on initial conditions. These properties make it a demanding benchmark for model discovery and long-horizon prediction. Accurately modeling the KS equation requires capturing higher-order spatial derivatives, nonlinear coupling terms, and the balance between energy injection and dissipation that governs the system’s attractor. Performance on this system thus provides a stringent test of whether the KANDy can recover interpretable governing dynamics while maintaining stability and fidelity in extended spatiotemporal rollouts.

\begin{figure*}[!htpb]
    \centering
    \includegraphics[width=1.0\linewidth]{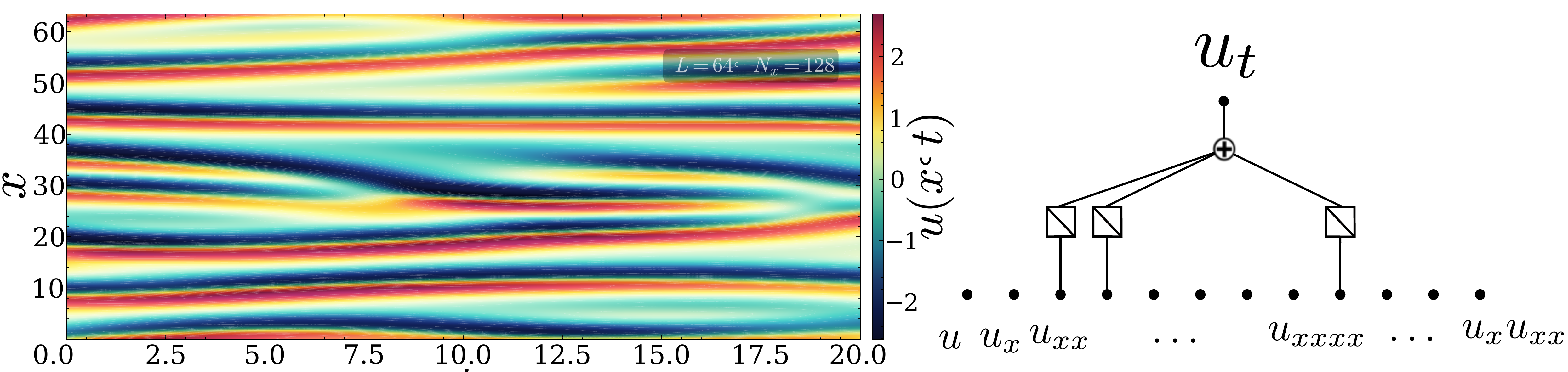}
    \caption{Left: the attractor for the KS space time. The KANDy architecture for the KS equation. The rollout of the trained model is stable and adheres to the attractor. Right: the KANDy architecture showing the lifted features with a robust choice of choices of lifts. Many spurious inputs were ''zeroed" out during training.}
    \label{fig:ks_spacetime}
\end{figure*}

Figure~\ref{fig:ks_spacetime} shows the space-time rollout after training the KANDy (left) model for the KS equation as well as the architecture after symbolic fitting (right), where the 1D splines are replaced with their linear surrogates evaluated on the following clusters of derived inputs: linear terms ($u$, $u_x$, $u_{xx}$, $u_{xxxx}$), purely nonlinear terms ($u^2$, $u^3$, $u_x^2$, $u_{xx}^2$), and mixed terms ($uu_x$, $uu_{xx}$, $uu_{xxxx}$, $u_xu_{xx}$).

When working with Equations~\eqref{eq:ks_governing}, one often considers a parameterized KS form
\begin{equation}
u_t = -\alpha\,u u_x - \nu\,u_{xx} - \kappa\,u_{xxxx} + c,
\label{eq:ks_param}
\end{equation}
where $\alpha$ controls the strength of the nonlinear advection, $\nu$ the (typically destabilizing) second-derivative term, $\kappa$ the (stabilizing) hyperdiffusion, and $c$ is a constant forcing/offset term (which is usually $0$ in the standard nondimensional KS equation).

\begin{equation}
u_t = -0.839\,u\,u_x - 0.459\,u_{xx} - 1.179\,u_{xxxx} - 0.002.
\label{eq:ks_discovered}
\end{equation}
Equation~\eqref{eq:ks_discovered} are the estimated governing equations from the KANDy model. Comparing \eqref{eq:ks_discovered} with \eqref{eq:ks_param} gives the implied parameters $\hat{\alpha} = 0.839$, $\hat{\nu} = 0.459$, $\hat{\kappa} = 1.179$, and $\hat{c} = -0.002$.

If the reference model is the standard nondimensional KS equation \eqref{eq:ks_governing}, the target coefficients are $\alpha=\nu=\kappa=1$ and $c=0$. The relative (percent) deviations of the identified coefficients from unity are therefore $|0.839-1| \approx 16.1\%$ (nonlinear term $uu_x$), $|0.459-1| \approx 54.1\%$  (second derivative  $u_{xx})$, $|1.179-1| \approx 17.9\%$ (fourth derivative $u_{xxxx}$), and the constant offset has absolute magnitude $|c| = 0.002$, which is small in absolute terms (and typically negligible if $u$ and its derivatives are $\mathcal{O}(1)$ after nondimensionalization).

Unlike the Lorenz case, the KS equation is a PDE whose numerical coefficients depend strongly on the chosen nondimensionalization and on the scaling of space and time in preprocessing. In particular, if one rescales variables via $x = L\,\tilde{x}$, $t = T\,\tilde{t}$, $u = U\,\tilde{u}$, then derivatives transform as $u_x \sim U/L$, $u_{xx}\sim U/L^2$, and $u_{xxxx}\sim U/L^4$, meaning the apparent coefficients in front of $u_{xx}$ and especially $u_{xxxx}$ can change substantially with $L$ and $T$ because of the aforementioned invariance properties. For example, if $L=2$ (doubling the length scale), the coefficient multiplying $u_{xx}$ would become one fourth its original value, since $u_{xx}$ scales as $1/L^2$. Thus, even a coefficient mismatch (e.g., $\hat{\nu}=0.459$ instead of $1$) may reflect a different effective scaling of $(x, t)$ rather than an incorrectly discovered governing equation.

From a structural standpoint, \eqref{eq:ks_discovered} recovers exactly the expected KS library terms: the nonlinear convection $u u_x$, the second derivative $u_{xx}$, and the fourth derivative $u_{xxxx}$. This is the key qualitative signature of KS dynamics. Quantitatively, if one applies a tolerance such as $\le 20\%$ relative error (a common heuristic in data-driven PDE identification when derivatives are noisy), then the nonlinear and fourth-derivative coefficients (16--18\%) would be deemed ``close,'' while the $u_{xx}$ coefficient (54\%) would not. Under a stricter $\le 10\%$ tolerance, none of the three would qualify as close to unity. However, because coefficient values are scale-dependent for PDEs, it is often more appropriate to assess closeness after accounting for the particular nondimensionalization used in generating/normalizing the data (e.g. by comparing against the known coefficients in that same scaled coordinate system).

The presence of the small constant term $-0.002$ is not part of the canonical KS equation~\eqref{eq:ks_governing}. As in many regression-based discovery settings, such a term can appear due to a nonzero mean of $u_t$ or imperfect centering of the data. If the underlying physics is KS with zero forcing, then subtracting the temporal/spatial mean of the data (or enforcing $c=0$ as a constraint) would typically drive this term toward zero.

Overall, the estimated equation \eqref{eq:ks_discovered} captures the correct KS mechanism and structure (nonlinear advection balanced by second- and fourth-order dissipation/instability), with coefficients that are of the right sign and order of magnitude; the quantitative closeness to the ``unit-coefficient'' nondimensional KS form depends on the tolerance used and, crucially, on the scaling conventions applied to the data and derivatives.

\begin{figure*}[!hpt]
    \centering
    \includegraphics[width=0.49\linewidth]{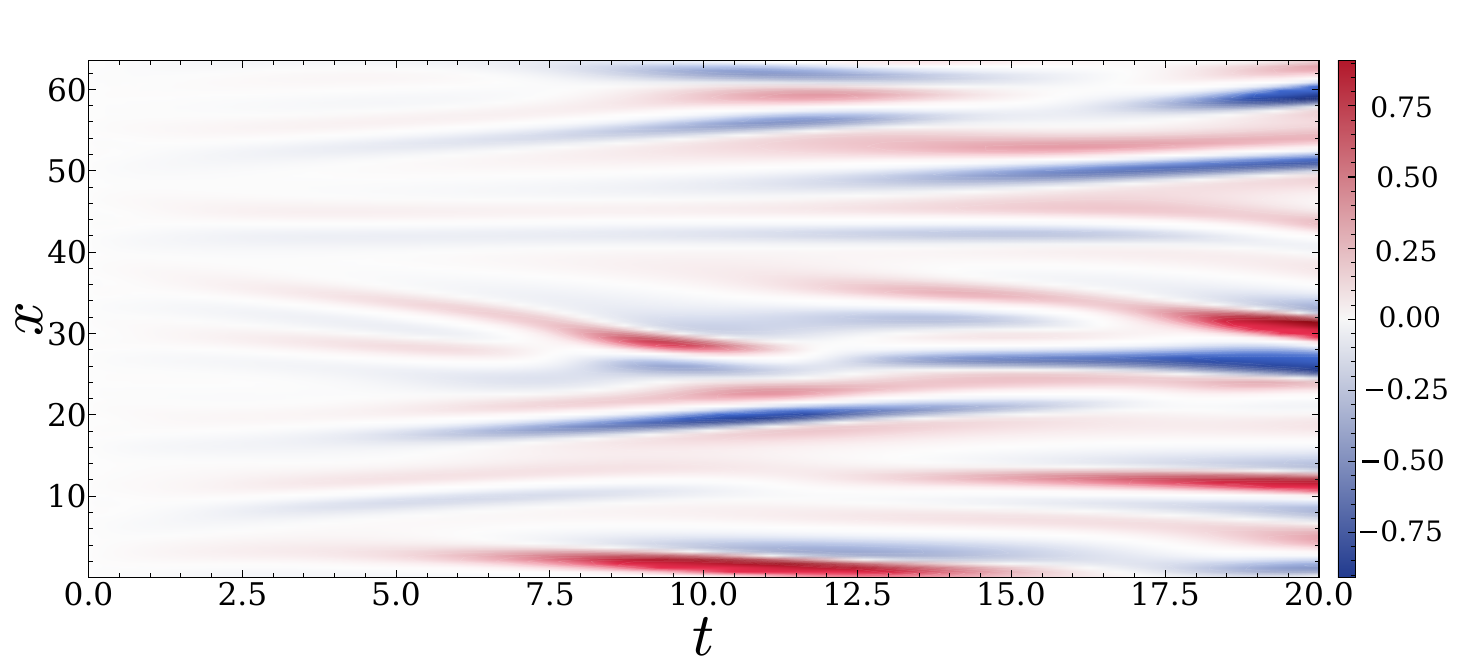}\includegraphics[width=0.49\linewidth]{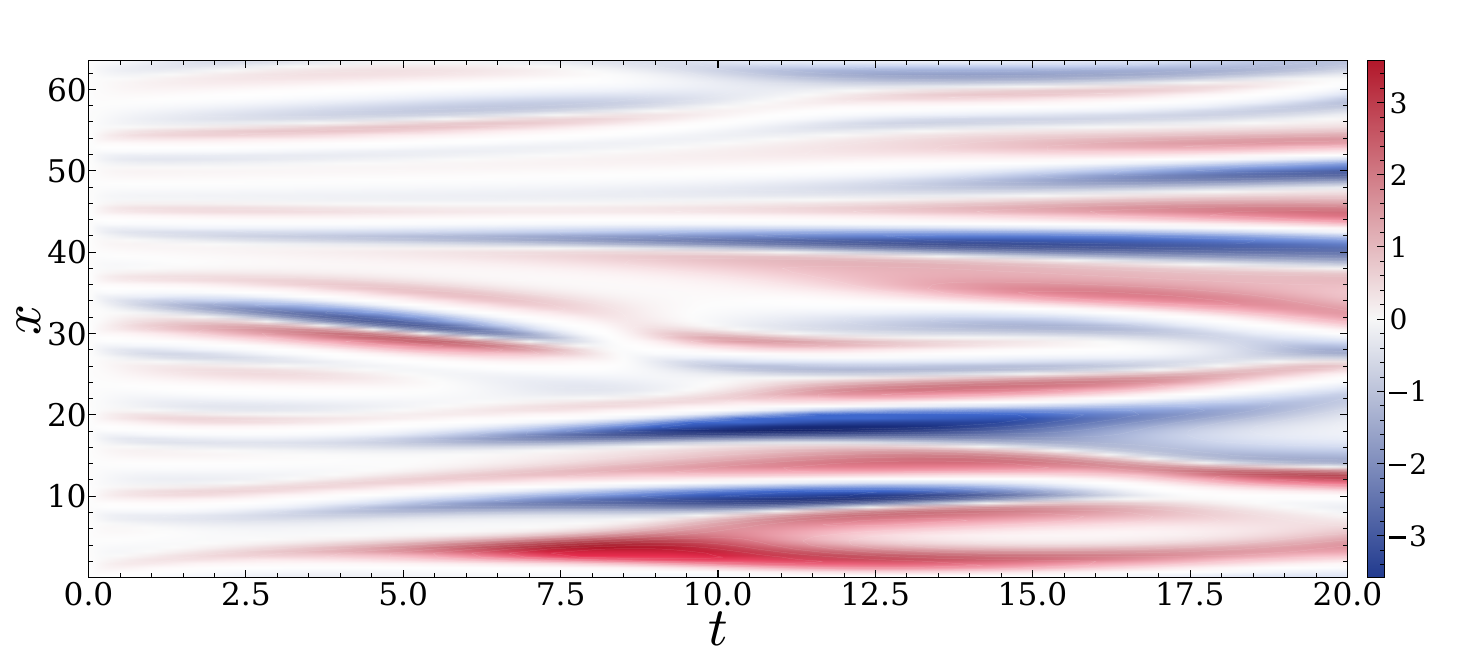}
    \caption{Left: the error field of the rollout from the KANDy models. Right: the error field of the space time from Equation~\eqref{eq:ks_discovered}.}
    \label{fig:equation-spacetime}
\end{figure*}

Figure~\ref{fig:equation-spacetime} shows the pointwise error field $u_{\text{KANDy}}(x,t) - u_{\text{true}(x,t)}$ of the KANDy rollout versus the integrated space--time, numerically integrated on a periodic domain using a Fourier pseudospectral method with exponential time differencing (left), and also, the error field of the discovered equations after removing the constant forcing term and rescaling the variables to match the normalization (right). The initial condition was set to match that used for the ground-truth data, and a $200$-sample burn-in interval was discarded to ensure the solution lies on the attractor. The resulting space-time field exhibits the characteristic slanted-stripe patterns and spatiotemporal chaos associated with the Kuramoto-Sivashinsky dynamics, demonstrating close qualitative agreement with the ground-truth evolution.

\subsubsection*{Inviscid Burgers}

Our aim is to examine the behavior of the solution under several chosen initial and boundary conditions and to assess the performance of the implemented numerical scheme, giving particular attention to the development of nonlinear wave steepening, the formation of shocks, and the ability of the method to capture discontinuities without introducing spurious oscillations, without relying on network depth. We first consider the evolution of the solution over time and compare the wave speed, shock location, and conservation properties (where appropriate) with the analytical solution, and also examine the effects of grid resolution and time-stepping parameters on the quality of the solution.

\begin{figure}[!htbp]
    \centering
    \includegraphics[width=1.0\linewidth]{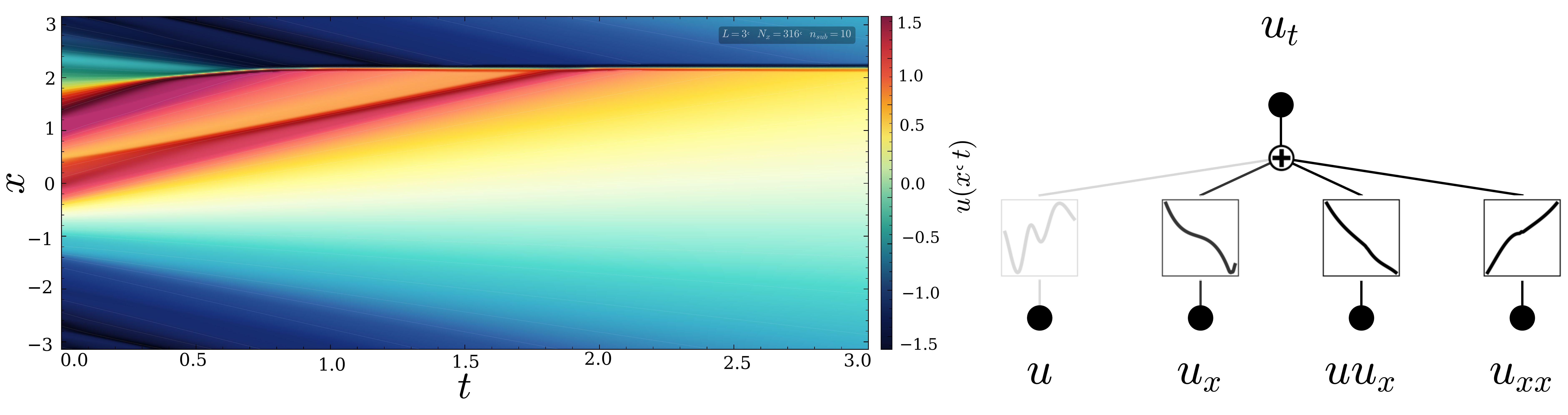}
    \caption{Left: The KANDy rollout of the inviscid-Burgers equation with random Fourier mode initial conditions. Right: the learned Inviscid Burger's equation trained on data over the shock.}
    \label{fig:burgers_side_by_side}
\end{figure}

Figure~\ref{fig:burgers_side_by_side} shows the learned Inviscid Burger's equation trained on data over the shock, where the KANDy model with a few lifted features, such as the advection term, which is absorbing the shock starting from random Fourier mode initial conditions, producing extreme shocks over which sparse-regression struggles. 

\begin{equation}
u_0(x) = \sum_{k=1}^{K}
\xi_k k^{-p} \sin(kx + \phi_k),
\quad
\xi_k \sim \mathcal{N}(0,1),
\quad
\phi_k \sim \mathrm{Uniform}(0,2\pi).
\label{eq:fourier_ics}
\end{equation}

Equation~\eqref{eq:fourier_ics} shows the choice of random Fourier mode initial conditions generated in numpy with a random seed of 42 for reproducibility, with 20 modes selected. Lifted inputs to the KANDy model were $u$, $u_x$, $uu_x$, $u_{xx}$. The rollout integration horizon was 5, and the integration was performed. High-resolution reference solutions were generated using a method-of-lines discretization with a Rusanov (local Lax--Friedrichs) flux and adaptive Runge--Kutta time integration. This numerical scheme robustly captures shock formation and propagation without introducing spurious oscillations. The training balanced integration with the model-guided MSE, using a weight of 0.5 (half MSE and half integral loss). Integration loss and MSE loss on the train and test trajectories, both reduced by an order of magnitude during training. The Burgers' equation used for this experiment, $u_t + u u_x = \nu u_{xx}$, also contained a viscosity term $\nu$, which KANDy reproduces in the governing equations. 

\begin{equation}
 u_t \;=\;- 1.158\,u\,u_x  + 1.566 u_{xx}
\label{eq:discovered_inviscid_burgers_random}
\end{equation}

The robust symbolic fitting used a 0.8 threshold for both complexity weight and $R^2$-threshold. Equation~\eqref{eq:discovered_inviscid_burgers_random} shows the governing equations estimated for this system. The estimated $\nu$ term was initially implausible from the physical standpoint until it was renormalized by the standard deviation introduced during the data-generating process $\sigma^2=179.76$, which, after rescaling, set $\mu=0.002$ whereas the viscosity term introduced to the model was $0.001$. For the $u\,u_{xx}$ term $\sigma^2=1.69$, which after rescaling put this coefficient at $-0.70$ where the true coefficient was $-1$, meaning that this term had a relative error of 30\%. 

\begin{figure}[!htbp]
    \centering
    \includegraphics[width=1.0\linewidth]{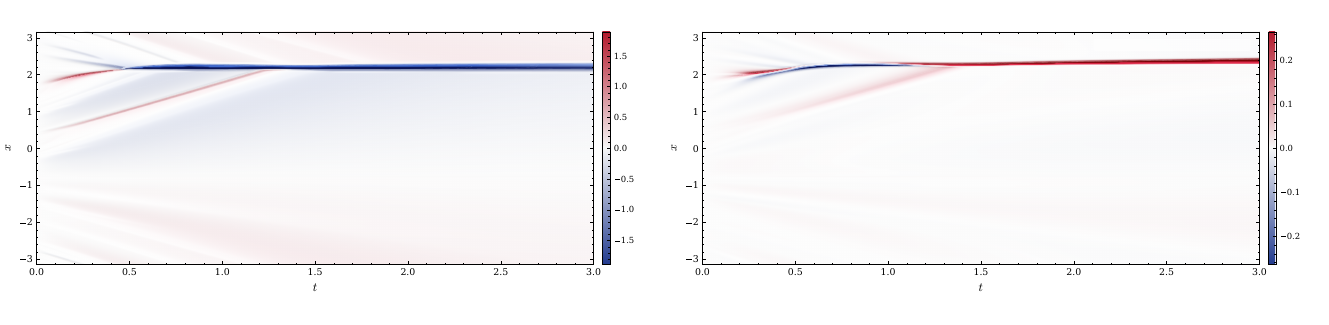}
    \caption{Left: The error field of KANDy rollout of the inviscid-Burgers equation with random Fourier mode initial conditions. Right: the learned Inviscid Burger's equation trained on data over the shock.}
    \label{fig:burgers_error_field}
\end{figure}

Figure~\ref{fig:burgers_error_field} shows the error field for both the learned KANDy model (left) and the discovered governing equations (right). Both error fields show the desired wave and shock formation that is quintessential of the dynamics of Inviscid Burgers'. The governing equations along the shock formation show a higher error in red, while the model learned a lower error (blue) in line with the abrupt shock both precisely along the singularity.

\begin{equation}
 u_t \;=\;- 1.158\,u\,u_x  - 0.02
\label{eq:discovered_inviscid_burgers}
\end{equation}

Next, we consider the one-dimensional inviscid Burgers equation on a periodic domain, $u_t + u u_x = \nu u_{xx}$, initialized with a smooth sinusoidal profile.  Equation~\eqref{eq:discovered_inviscid_burgers} shows the governing equations discovered with KANDy on the inviscid Burger's equation integrated over the shock.
However, a tight fit for this particular spline is the hyperbolic secant squared $\mbox{sech}^2(x)$ with $R^2=0.99$, demonstrating that KANDy adequately learned the shock of this term. However, the global function fitting reduces this term to a linear term after weighting for complexity and global consistency, thereby successfully learning the governing equation. As expected, characteristics intersect at approximately $t \approx 1$, after which the solution develops a sharp discontinuity. The reference solution spans both pre-shock and post-shock regimes, providing a stringent benchmark for data-driven discovery under non-smooth dynamics.

The training sample was explicitly taken over both smooth and post-shock intervals. Derivatives were approximated using forward differences, yielding supervision signals with large, localized gradients near the shock. This setting is deliberately challenging, as many existing PDE discovery methods degrade significantly in the presence of discontinuities and sparse temporal sampling. Despite these challenges, KANDy remained numerically stable throughout training, rollout, and equation discovery, indicating robustness to both temporal sparsity and shock-induced nonlinearity. KANDy was trained to predict the local time derivative $u_t$ at each spatial grid point. The input feature library was deliberately minimal and physically structured, consisting of the local state $u$, and the spatial derivative of the flux, $\partial_x\!\left(\tfrac{1}{2}u^2\right)$. KANDy achieved low training and test root-mean-square errors on the supervised derivative data, indicating that the local temporal dynamics were learned accurately despite the presence of shocks.

To ensure that the learned local dynamics were globally consistent in time, a differentiable rollout loss was incorporated. Short trajectory windows were sampled from the reference solution, and the learned model was integrated forward in time using a fourth-order Runge--Kutta scheme. The rollout loss penalized deviations between predicted and true states over these short horizons. 

Incorporating this rollout loss significantly improved stability and long-term accuracy. Models trained without rollout supervision exhibited rapid error accumulation and failed to reproduce post-shock structure. In contrast, rollout-regularized models remained stable across the entire temporal domain.

Extending this analysis to a more complicated case, we applied KANDy to inviscid Burgers' with random Fourier mode initial conditions. Figure~\ref{fig:burgers_side_by_side} shows the rollout of the KANDy model as well as the model architecture taking in the lifted $u$, $u_x$, $uu_x$, $u_{xx}$ starting from the initial condition $u(x,0)=\sin(x)$, the learned dynamics were integrated forward across the full simulation interval. The resulting spatiotemporal solution closely matched the ground-truth solution, including correct shock formation time, shock location, and propagation speed.

At the final time, the predicted solution aligned closely with the reference solution across the entire spatial domain, with only minor discrepancies localized near the shock front. Notably, no artificial diffusion or oscillatory artifacts were observed, despite the absence of explicit shock-capturing terms in the learned model.

To analyze the internal structure of the learned model, edge activations within the KAN were extracted and examined. One dominant edge exhibited a strong linear relationship between its input and output activations, with a coefficient of determination $R^2$ close to unity. Linear regression revealed that this edge effectively encoded an affine transformation of the flux gradient term.

Alternative nonlinear models, including sinusoidal and composite sine--linear functions, produced substantially lower $R^2$ values, confirming that the learned relationship was fundamentally linear in nature.

Additional edge activations exhibited highly localized spatial structures centered near the shock location. These activations were well-fit by functions of the form $\mathrm{sech}^2\!\left(\frac{x - x_0}{\ell}\right)\tanh\!\left(\frac{x - x_0}{\ell}\right)$ which are characteristic of shock-layer derivatives in Burgers-type equations. Nonlinear least-squares fitting yielded high coefficient of determination values, indicating that the KAN internally represented a physically meaningful shock geometry.

Correlation and symmetry analyses further confirmed strong agreement between the learned activations and analytically motivated shock profiles.
Following training, symbolic regression was applied to the learned KAN representation. The dominant symbolic expression recovered by the model corresponded directly to the inviscid Burgers equation, $u_t \approx -\partial_x\!\left(\tfrac{1}{2}u^2\right)$, up to an affine scaling factor. Importantly, this expression emerged without explicit enforcement during training and remained valid across both smooth and shock-dominated regimes.


\begin{table}[!htpb]
\centering
\begin{tabular}{l c >{$}l<{$}}
\hline
\textbf{Method} &\textbf{NRMSE} & \multicolumn{1}{c}{\textbf{Discovered Equation}} \\
\hline
\textbf{KANDy} & 0.049 &
\begin{aligned}[t]
u_t &= -1.084\,u\,u_x - 0.026
\end{aligned} \\
OLS & 0.150 &
\begin{aligned}[t]
u_t &= -1.083\,u\,u_x + 0.063\,u - 0.082\,u_x \\
    &\quad + 0.020\,u_{xx} + 0.024
\end{aligned} \\
LASSO & 0.150 &
\begin{aligned}[t]
u_t &= -1.081\,u\,u_x + 0.062\,u - 0.082\,u_x \\
    &\quad + 0.020\,u_{xx} + 0.024
\end{aligned} \\
PDE-FIND FD & 1.562 &
\begin{aligned}[t]
u_t &= 12.272\,u_x
\end{aligned} \\
PDE-FIND SmoothedFD & 1.562 &
\begin{aligned}[t]
u_t &= 12.272\,u_x
\end{aligned} \\
\hline
\end{tabular}
\caption{Head-to-head comparison of KANDy and baseline methods on inviscid Burgers with random Fourier-mode initial conditions. The true equation is $u_t = -u\,u_x$. Coefficients are rounded and terms with $|\text{coeff}| \le 0.01$ are omitted.}
\label{tab:burgers_baseline_comparison}
\end{table}

Table~\ref{tab:burgers_baseline_comparison} shows the comparison is carried out on a matched inviscid Burgers benchmark designed to mirror our initial inviscid Burgers experiment, except performed using AI agents to reduce human error in fine-tuning each of the methods. Both PDE-Find tests only had one active term, while both sparse regressions had 5. Only KANDy has the single active flux term and obtained the lower NRMSE of $0.048$. The state is discretized on a one-dimensional periodic grid with $N_x=128$, with a spatial domain of $[-\pi,\pi)$ with no duplicated endpoints. Initial conditions are drawn from a random Fourier ensemble with $K=10$ modes, random seed $0$, and mode amplitudes scaled as $\sigma_k=k^{-1.5}$, which biases the initialization toward smoother low-frequency structure while still allowing enough complexity to form shocks. The trajectory was generated over the time interval $t\in[0,2]$ with step size $dt=0.004$ with a first-order Rusanov flux solver integrated with RK45 from SciPy using tolerances $rtol=1e-6$ and $atol=1e-8$. Those settings matter because they define both the spatial resolution and the difficulty of the identification problem: the data are shock-forming, numerically stable, and sampled densely enough that derivative quality is the dominant bottleneck rather than sampling scarcity.

For KANDy, the model setup is intentionally aligned with the problem's shock structure. Spatial derivatives are computed with a TVD minmod scheme and temporal derivatives with a forward difference, $(U^{n+1}-U^n)/dt$, rather than central differencing. The feature library is very small and physics-motivated: $[u,\;u_x,\;u\,u_x,\;u_{xx}]$. This gives the model direct access to the true nonlinear transport term while still allowing it to test whether lower-order nuisance terms such as $u$, $u_x$, or $u_{xx}$ are needed. These four features are passed into a KAN with architecture $[4,1]$, meaning four inputs and one scalar output for $u_t$. The spline grid is set to $7$ and the spline order to $k=3$, giving the network enough flexibility to represent nonlinear relationships without making the symbolic form excessively hard to interpret. 300 training steps were used. In this setup, sparsity is not imposed by a separate thresholding stage; instead, it emerges from learned edge activations, allowing inactive inputs to effectively collapse during training. This is why KANDy can retain only the dominant $u\,u_x$ contribution while suppressing the others.

The PDE-FIND baseline is built using PySINDy’s PDELibrary together with a derivative order $3$, and STLSQ as the sparse optimizer. The degree $2$ polynomial library supports combinations up to quadratic order, which is, in principle, sufficient for the Burgers nonlinearity. The derivative order of 3 means the library can include spatial derivatives up to third order, though in the reported comparisons the key terms of interest are $u_x$ and $u_{xx}$. STLSQ then performs sequential thresholded least squares to prune coefficients and recover a sparse equation. Three temporal differentiation choices are tested in the PySINDy stack: finite difference, smoothed finite difference, and Savitzky--Golay derivative estimates. However, these affect only the estimate of $u_t$, not the construction of the spatial derivative features in PDELibrary. That distinction is central to the failure mode: even with alternative temporal differentiation, the spatial feature matrix remains corrupted because the library uses its own internal finite-difference machinery rather than the shock-aware TVD derivative used by KANDy.


Finally, the symbolic extraction procedure for KANDy introduces a second layer of “model setup” that is separate from training but essential for producing a readable discovered equation. A deep copy of the trained KAN is made because symbolic fitting mutates the learned spline structure. Activation caching is re-enabled, and then a forward pass on a batch of data populates the caches needed for symbolic fitting. Symbolic conversion is then run with the KANDy library with the \verb|robust_auto_symbolic| function using the library $x$, $x^2$, $x^3$, with $R^2=0.80$, $C=0.80$, $K_{\text{top}}=8$ as the default inputs. These hyperparameters balance fidelity and simplicity: only edges with a sufficiently good symbolic fit are retained, preference is given to simpler expressions, and only the most important edges are considered. After extraction, the expression is expanded, and coefficients below a tolerance of $0.01$ are removed. That rounding step is important because symbolic fitting can create algebraically complicated but nearly cancelling artifacts; expanding and thresholding ensure that the final reported PDE reflects the actual learned structure rather than spurious symbolic clutter.

The AI agents used for this experiment are ``Physics-Informed" in that they access pre-tested written software that implements the KANDy algorithm. The complete code implementation, as well as agent memory for this experiment, is found in GitHub.\footnote{\href{https://github.com/Center-For-Complex-Systems-Science/kandy}{https://github.com/Center-For-Complex-Systems-Science/kandy}}

\section*{Discussion}

This work introduces KANDy, a data-driven framework for dynamical system discovery and forecasting built around zero-depth KANs with library inputs augmented with library terms and optional integrator fine-tuning.  We find that, although KANs were developed to enable deep learning (network depths $\geq 2$), network depth hinders the discovery of governing equations, slightly degrades or does not improve learning vector-field approximations, whereas, juxtaposed with KANDy, it preserves invariant structure across a range of discrete, continuous, chaotic, and geometric settings in terms of both quotient and network topologies (e.g. learns fractal and continuous but undifferentiable patterns from data and reconstructs Kuramoto Oscillator Networks). Shifting from depth to expressivity, Kolmogorov--Arnold structured representations yield models that are simultaneously accurate, stable under rollout, and more directly interpretable for equation discovery.

A conventional deep-learning perspective is that depth is required to capture complex dynamics via hierarchical composition. In contrast, our results suggest that for many dynamical systems, the relevant compositional structure is already supplied by the physics (e.g., polynomial and bilinear interactions, symmetries, and differential operators), and the learning problem reduces to identifying a stable nonlinear map consistent with that structure. Figure~\ref{fig:zero-depth} illustrates this principle in the Lorenz setting: a zero-depth KAN equipped with a small dictionary of nonlinear terms (e.g., products such as $xy,xz,yz$) is sufficient to approximate the vector field and, when regularized toward simplicity, to support governing-equation recovery. In this sense, KANDy reallocates model capacity away from learning deep latent features and toward learning calibrated nonlinear responses in an identifiable functional parameterization.

A central challenge in chaotic prediction is that long-horizon phase accuracy is fundamentally limited by the largest Lyapunov exponent, even for perfect models. Consequently, a meaningful evaluation must combine (i) short-horizon accuracy, (ii) error-growth behavior relative to the Lyapunov time, and (iii) preservation of invariant-set geometry. Figures~\ref{fig:lorenz} (Lorenz attractor reconstruction) and the out-of-distribution rollout figure (Fig.~\ref{fig:lorenz}) show that the learned dynamics reproduce the global geometry and folding structure of the Lorenz attractor, and that trajectories initialized off-distribution but within the basin of attraction converge back onto the attractor after a transient.

Quantitatively, the normalized rollout error exhibits a transition consistent with the theoretical predictability horizon: Fig.~\ref{fig:lyapunov_curve} shows rapid error growth on a timescale aligned with the Lyapunov time, after which phase decorrelation is expected. Importantly, this degradation in phase does not imply a collapse of the learned dynamics; rather, it reflects faithful reproduction of the system's expansion rates and instability structure. The combined qualitative and quantitative evidence supports the interpretation that KANDy learns a correct chaotic mechanism up to the intrinsic predictability limit and preserves long-horizon geometric structure beyond it.

Many sequence models trained with one-step (teacher-forced) losses perform well locally but drift rapidly when deployed autoregressively due to compounding distributional shift. Our autoregressive integration regularization directly targets this mismatch by optimizing parameters through rollout behavior. The practical effect is visible in the Lorenz rollout experiments (Fig.~\ref{fig:lorenz} and the subsequent rollout figure): the learned map remains dynamically consistent on its own generated trajectories and returns to the attractor from nearby off-training initial conditions. This supports the broader claim that stable chaos-aware forecasting benefits from training objectives that explicitly include a multi-step rollout structure rather than relying solely on one-step regression. Applying this paradigm as a replacement for the least-squares estimate used in sparse regression for equation discovery greatly improves the robustness of the discovered models.

For spatiotemporal chaos, learning the full PDE flow map end-to-end can be data-hungry and unstable, while pure numerical integration can suffer from discretization error and unresolved effects. We therefore adopt a hybrid approach for the Kuramoto--Sivashinsky equation. Adding advection, hyperdiffusion, and other physics-informed width-building terms to the KANDy zero-depth wide network improves the autoregressive rollout. 

Beyond forecasting, a distinguishing outcome of KANDy is its ability to recover invariant and topological structure from data. The Hopf fibration experiment (Fig.~\ref{fig:hopf_main}) demonstrates that a trained KAN can learn a quotient map that collapses entire group orbits to points on the quotient space, providing a concrete example of data-driven symmetry reduction. In dynamical contexts, such symmetry-aware representations can simplify the effective dynamics and improve interpretability.

KANDy is not intended to extrapolate arbitrarily outside the training support or outside the basin of attraction. The Lorenz experiments highlight that while off-distribution initial conditions within the basin can converge to the attractor (rollout figure following Fig.~\ref{fig:lorenz}), sufficiently distant initializations can diverge, consistent with chaotic sensitivity and data-driven modeling limits. Additionally, library design introduces a trade-off: overly rich dictionaries can lead to non-identifiability (multiple explanations fit the data), while overly sparse dictionaries can prevent recovery of the true governing form. Finally, symbolic extraction and clean governing-equation recovery may degrade under substantial noise or insufficient sampling, and scaling residual corrections to very high-dimensional PDE states may benefit from additional parameter sharing (e.g., spectral or convolutional structure).

These results suggest a synthesis between sparse-dictionary discovery, chaos-aware rollout training, and invariant learning. Promising directions include: (i) complexity-controlled discovery via explicit sparsity and penalties on spline or library complexity; (ii) enforcing known symmetries (e.g., translation in KS) or jointly learning invariances through quotient objectives; (iii) evaluating learned models beyond phase accuracy using invariant measures (spectra, correlation dimension, Lyapunov spectrum estimates); and (iv) extending residual-learning KANDy to multiscale PDE settings where unresolved physics can be treated as a learnable correction.

Finally, KANDy can recover governing equations even when traditional sparse regression may fail, e.g., in Kuramoto Oscillators, the Ikeda optical cavity map, and Holling Type II systems. KANDy not only estimates the governing equations but also maps their structure back to the original network, providing a potential future research direction for recovering functional network structures purely from the phases of coupled oscillators. 

KANDy reframes equation discovery as the search for an appropriate lifted coordinate system in which nonlinear dynamics become structurally simple. The lift map transforms the system's states into a higher-dimensional, potentially infinite, space where the dynamics are approximately linear. The effectiveness of this approach across chaotic ODEs, PDEs, and topological quotient maps indicates that the primary challenge lies in representation rather than depth. This representation diverges from the analogy of the Kolmogorov-Arnold Representation theorem. Although not explicitly stated to avoid confusion with methodologies that employ the Koopman formalism, the lift map aligns with a finite-dimensional observable embedding consistent with Koopman theory. KANDy integrates sparse regression, Koopman theory, and Kolmogorov–Arnold networks into a unified framework for modeling dynamical systems. The principled synthesis of Kolmogorov-Arnold Networks and sparse regression (KANDy) addresses the loss of multivariate structure that occurs when decomposition is restricted to univariate inner functions.

\section*{Acknowledgments}
KS, JF and EB gratefully acknowledge this work was funded by the Army Research Office 
under award no.~W911NF2310393.

The KANDy software is offered freely for use and is an Agentic AI software system built on Claude Code with models accessed between August, 2025 and March 2026 using the Claude Opus 4.6 and Claude Sonnet models. All agent personality prompts are available for review, and only call tools are available from a Python API developed from rigorous scientific code and experiments. AI Agents only modified code based on rigorous experimentation, and all generated agent code is constrained to a single Python script for review by the authors. AI tools were used to polish the text. Complete code implementation with agent memory is found in GitHub.\footnote{\href{https://github.com/Center-For-Complex-Systems-Science/kandy}{https://github.com/Center-For-Complex-Systems-Science/kandy}}

\bibliographystyle{unsrtnat}
\bibliography{references}

\appendix

\section*{Proofs}

\begin{theorem}\label{thm:single_neuron} 
There does not exist $h,u,v:[0,1]\mapsto \mathbb{R}$ such that $f(x,y) = xy = h(u(x) + v(y))$ for all $x,y\in I^2$ (the unit square).
\end{theorem}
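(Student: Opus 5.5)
We will prove the statement under the standing assumption of the Kolmogorov--Arnold setting, namely that $h,u,v$ are \emph{continuous} (with $h$ defined on $\mathbb{R}$, or at least on the range of $u(x)+v(y)$). Continuity is genuinely needed here. Without it, one can likely build pathological solutions using Hamel-basis constructions, so the argument must use continuity in an essential way.

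The plan is to play two facts against each other: injectivity forced by the rows $y>0$, and constancy forced by the row $y=0$. Suppose for contradiction that $xy=h(u(x)+v(y))$ on $[0,1]^2$.

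First, set $y=1$. Then $x = h(u(x)+v(1))$, so $x\mapsto u(x)$ must be injective, since $u(x)=u(x')$ would force $x=x'$. A continuous injective function on $[0,1]$ is strictly monotone. Hence $u([0,1])$ is a nondegenerate compact interval $[a,b]$ with $a<b$.

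Second, set $y=0$. Then $h(u(x)+v(0))=0$ for all $x$, so $h\equiv 0$ on the interval $J_0=[a+v(0),\,b+v(0)]$, which has positive length $b-a$.

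Third, fix $y>0$. The map $x\mapsto h(u(x)+v(y))=xy$ is strictly increasing, hence injective. Because $s\mapsto s+v(y)$ is a bijection from $[a,b]$ onto $J_y=[a+v(y),\,b+v(y)]$, $h$ is injective on $J_y$.

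Finally, choose $y>0$ small enough that $|v(y)-v(0)|<(b-a)/2$; this is possible by continuity of $v$ at $0$. Then $J_0\cap J_y$ contains an interval of positive length. On that interval $h$ is identically zero, yet also injective, which is a contradiction.

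The main obstacle is only the bookkeeping around where continuity enters. It is used in three places: to upgrade injectivity of $u$ to the images being nondegenerate intervals, to get the overlap $J_0\cap J_y$ via continuity of $v$ at $0$, and to make the argument meaningful at all, given the discontinuous counterexamples. I would state this hypothesis explicitly at the start of the proof, consistent with Eq.~\eqref{eq:KART}, where the inner and outer functions are continuous.
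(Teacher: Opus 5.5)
Your proof is correct, and its skeleton matches the paper's. The row $y=0$ forces $h\equiv 0$ on $J_0=[a+v(0),\,b+v(0)]$, and continuity of $v$ at $0$ gives a small $y_*>0$ whose interval $J_{y_*}$ overlaps $J_0$.

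The difference is in the closing step. The paper picks one point $x_*$ with $u(x_*)=a-d$, where $d=v(y_*)-v(0)$, and concludes $x_*=0$ from $x_*y_*=0$. It then tries to finish by letting $y$ vary, so that $u(0)=a-(v(y)-v(0))$ would take several values. That ending has two gaps. First, $a-d\in[a,b]$ holds only for $d\le 0$; for $d>0$ one must use $b-d$. Second, continuity of $v$ does not make $v(y)$ vary with $y$. If $v$ is constant near $0$, the argument as written produces no contradiction, although that case fails for a separate, trivial reason.

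You instead observe that $h$ is injective on $J_{y_*}$, because $x\mapsto xy_*$ is injective and $u$ maps onto $[a,b]$. This lets you stay at a single $y_*$: $h$ is simultaneously zero and injective on the positive-length interval $J_0\cap J_{y_*}$. There is no case split on the sign of $d$ and no need for $v$ to be nonconstant. In effect you use many values of $x$ at one $y$, rather than one $x$ at many values of $y$, which is exactly what a repair of the paper's argument needs. Your use of the row $y=1$ to get injectivity of $u$ is also a clean substitute for the paper's non-constancy argument for $a<b$.

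You are right to make continuity explicit, since the statement omits it and the paper uses it silently. Without it, one can choose $u,v$ making $(x,y)\mapsto u(x)+v(y)$ injective and then define $h$ on the image. Neither argument uses continuity of $h$; both need only continuity of $u$ and of $v$ at $0$.
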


\begin{proof}
By way of contradiction, suppose  $h,u,v$ exist.
Let $y=0$ so that we have 
$$0=x\times 0= h(u(x) + v(0)).$$

Since $u:I\mapsto\mathbb{R}$ is continuous we have $u(I)=[a,b]$ for some $a$ and $b$ with $a<b$ and let $v(0)=c$.

Then $h(t)=0$ for all $t\in[a+c, b+c]$.
 
Next, we show a small change in $y$ forces a spurious zero for $y>0$

If $a=b$, then $u$ is constant, so $h(u(x)+v(y))$
depends only on$y$, but $xy$ depends on $x$; contradiction. Hence $a<b$.

Since $v$ is continuous at $0$, choose $y_*>0$ and define $d$ such that $d=v(y_*) -c$, and this satisfied $|d| < b -a$.

But then $a-d\in [a,b]$, so there exists $x_* \in I$ such that $u(x_*) = a-d$.

Then we have 
$$u(x_*) + v(y_*) = (a -d) + (c + d) = a + c.$$

But  $a + c \in [a+c, b+c]$, so $h(u(x_*) + v(y_*))=0$.

By construction $h(u(x_*) + v(y_*))=x_*y_*$ and since $y_*>0$, it must be the case that $x_*=0$.

Recall $x_*$ was chosen so that $u(x_*)=a-d$, then $u(0)=a-d$ and because this holds for arbitrarily small $y$, e.g., $d=v(y)-c$ we get $u(0)$ takes on multiple values which is a contradiction.

Since for arbitrarily small $y>0$ we obtain $u(0)=a-(v(y)-c)$ and the right-hand side varies with $y$ by continuity of $v$, this forces $u(0)$ to take multiple distinct values, contradicting that $u$ is a well-defined function. $\square$
\end{proof}

\begin{theorem}\label{thm:deep_kan_xy}
Let $\mathrm{KAN}_\theta:\mathbb{R}^3\to\mathbb{R}^3$ be a KAN with architecture
$$
(3,1,1,\dots,1,3),
$$
so that every hidden layer has width $1$. Assume each node is of KAN type: it is a sum of univariate functions of its inputs. Then $\mathrm{KAN}_\theta$ cannot represent the Lorenz component
$$
f_3(x,y,z)=xy-\beta z
$$
on $[0,1]^3$. Consequently, arbitrarily deep KANs with no additional hidden width cannot represent the bilinear nonlinearities in the Lorenz system.
\end{theorem}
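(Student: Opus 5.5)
The plan is to reduce the deep, width-one architecture to the single-neuron situation of Theorem~\ref{thm:single_neuron}, then close the argument with a direct level-set contradiction. Throughout I assume, as that theorem's proof implicitly does, that all edge functions are continuous.

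\textbf{Step 1 (collapse the architecture).} The first hidden layer has one node, so its value is
\[
s(x,y,z)=a(x)+b(y)+c(z)
\]
for continuous univariate $a,b,c$. Every later hidden layer has width one, so each later node is a single univariate function of the previous node; composing them gives a continuous univariate function of $s$. The output layer computes each output as a sum of univariate functions of the last hidden node, which is again a univariate function of $s$. Hence the third output has the form
\[
\mathrm{KAN}_\theta(x,y,z)_3 = H\bigl(a(x)+b(y)+c(z)\bigr)
\]
for some continuous $H:\mathbb{R}\to\mathbb{R}$, no matter how many hidden layers there are.

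\textbf{Step 2 (restrict to $z=0$).} If the network represented $f_3$, then for all $(x,y)\in[0,1]^2$ we would have
\[
xy = H\bigl(a(x)+b(y)+c(0)\bigr).
\]
Setting $h(t)=H(t+c(0))$, $u=a$ and $v=b$ gives $xy=h(u(x)+v(y))$. This is exactly the configuration excluded by Theorem~\ref{thm:single_neuron}, and I would invoke that theorem here.

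\textbf{Step 3 (direct contradiction).} Because the published argument for Theorem~\ref{thm:single_neuron} is somewhat loose in its final step, I would also include a self-contained version.
\begin{itemize}
\item First, $u$ is not constant: otherwise, taking $y=1$, the right-hand side would not depend on $x$ while $xy=x$ does. So $u([0,1])=[\alpha,\beta]$ with $\alpha<\beta$.
\item Setting $y=0$ gives $h\equiv 0$ on $J_0=[\alpha+v(0),\,\beta+v(0)]$.
\item By continuity of $v$ at $0$, choose $y_*>0$ with $|v(y_*)-v(0)|<(\beta-\alpha)/2$. The shifted interval $[\alpha+v(y_*),\,\beta+v(y_*)]$ then meets the interior of $J_0$ in a nondegenerate open interval $K$.
\item By the intermediate value theorem, $u$ attains every value of $K-v(y_*)$, which is a nondegenerate interval, so infinitely many distinct points $x\in[0,1]$ satisfy $u(x)+v(y_*)\in K\subset J_0$.
\item For each such $x$ we get $xy_*=h(u(x)+v(y_*))=0$. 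Since $y_*>0$, this forces $x=0$, contradicting the existence of infinitely many distinct such $x$.
\end{itemize}

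\textbf{Step 4 (consequence).} Since the argument is independent of depth, no KAN of architecture $(3,1,\dots,1,3)$ represents $f_3$, and hence none represents the bilinear Lorenz nonlinearities.

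The main obstacle I expect is Step 1: stating precisely that a chain of width-one KAN layers composes to a single univariate function of one inner sum, including the output layer's summation. The rest is a routine continuity and intermediate-value argument.
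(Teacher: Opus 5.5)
Your proposal is correct and follows the paper's proof: collapse the width-one chain to $H(u(x)+v(y)+w(z))$, restrict to $z=0$, absorb $w(0)$ into the inner function, and invoke Theorem~\ref{thm:single_neuron}. Your Step 3 is a genuine improvement. The paper's proof of that theorem needs $d\le 0$ for $a-d\in[a,b]$, and it yields no contradiction when $v$ is constant near $0$; your overlapping-interval argument at a single $y_*$ covers both cases. One small fix: rename the endpoint $\beta$ of $u([0,1])$ so it does not clash with the Lorenz parameter.
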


\begin{proof}
Because the first hidden layer has width $1$, its output has the form
$$
s_1(x,y,z)=u(x)+v(y)+w(z)
$$
for some continuous univariate functions $u,v,w$.

Since each subsequent hidden layer also has width $1$, its output is obtained by applying a continuous univariate function to the previous scalar output. Thus, after any finite number of hidden layers, the final hidden representation has the form
$$
s_L(x,y,z)=\Phi(u(x)+v(y)+w(z))
$$
for some continuous univariate function $\Phi$.

Each output coordinate is then a continuous univariate function of $s_L$, so in particular the third output coordinate must be of the form
$$
\mathrm{KAN}_{\theta,3}(x,y,z)=H(u(x)+v(y)+w(z))
$$
for some continuous univariate function $H$ (after absorbing $\Phi$ into $H$).

Suppose, for contradiction, that this equals the Lorenz component:
$$
xy-\beta z = H(u(x)+v(y)+w(z))
\qquad \text{for all } (x,y,z)\in [0,1]^3.
$$
Set $z=0$. Then
$$
xy = H(u(x)+v(y)+w(0)).
$$
Define $\widetilde v(y)=v(y)+w(0)$. Then
$$
xy = H(u(x)+\widetilde v(y))
\qquad \text{for all } (x,y)\in [0,1]^2.
$$
This contradicts Theorem~\ref{thm:single_neuron}, which states that there do not exist continuous functions $H,u,\widetilde v:[0,1]\to\mathbb{R}$ such that
$$
xy=H(u(x)+\widetilde v(y))
$$
for all $(x,y)\in [0,1]^2$.

Therefore, the third coordinate of a width-$1$ deep KAN cannot represent $xy-\beta z$. Hence, arbitrarily deep KANs with no additional width cannot represent the bilinear terms in the Lorenz system.
\end{proof}

\begin{proposition}\label{prop:quadratic_obstruction}
Let $V$ be the linear span of quadratic hidden features
$$
\ell_j(x,y,z)^2,\qquad \ell_j(x,y,z)=a_jx+b_jy+c_jz,\qquad j=1,\dots,m.
$$
If $xy,xz\in V$, then $V$ must also contain nontrivial linear combinations involving diagonal terms $x^2,y^2,z^2$ unless the coefficients of the generators satisfy a system of cancellation identities. In particular, the realization of the Lorenz nonlinearities $xy$ and $xz$ from quadratic additive channels is not termwise sparse: cross terms are necessarily produced together with diagonal terms at the feature level and only disappear, if at all, after cancellation across channels.
\end{proposition}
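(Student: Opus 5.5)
The plan is to identify quadratic forms in $(x,y,z)$ with symmetric $3\times 3$ matrices and to read the proposition as a statement about sums of rank-one matrices. Expanding each generator gives
$$
\ell_j^2=a_j^2x^2+b_j^2y^2+c_j^2z^2+2a_jb_j\,xy+2a_jc_j\,xz+2b_jc_j\,yz ,
$$
so $\ell_j^2$ corresponds to the rank-one positive semidefinite matrix $\mathbf{w}_j\mathbf{w}_j^{\top}$, where $\mathbf{w}_j=(a_j,b_j,c_j)^{\top}$. A combination $\sum_j\lambda_j\ell_j^2$ then corresponds to $M=\sum_j\lambda_j\mathbf{w}_j\mathbf{w}_j^\top$. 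The target $xy$ has matrix $\tfrac12(E_{12}+E_{21})$, and $xz$ has matrix $\tfrac12(E_{13}+E_{31})$; both have zero diagonal.

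\textbf{Step 1 (termwise non-sparsity).} Take any generator whose feature contributes a cross term, so that $a_jb_j\neq0$ or $a_jc_j\neq 0$. Then $a_j\neq 0$, and hence $a_j^2>0$, so the feature itself contains $x^2$ with a nonzero coefficient. The same reasoning applied to $b_j$ or $c_j$ shows it also contains $y^2$ or $z^2$. This is the ``produced together at the feature level'' claim, and it is immediate because a square of a linear form with a nonzero off-diagonal entry has nonzero diagonal entries.

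\textbf{Step 2 (cancellation identities).} If $\sum_j\lambda_j\ell_j^2=\alpha\,xy+\gamma\,xz$, compare coefficients monomial by monomial. This gives the diagonal identities
$$
\sum_j\lambda_ja_j^2=\sum_j\lambda_jb_j^2=\sum_j\lambda_jc_j^2=0 .
$$
It also gives $\sum_j\lambda_jb_jc_j=0$, together with the prescribed values $\sum_j 2\lambda_ja_jb_j=\alpha$ and $\sum_j 2\lambda_ja_jc_j=\gamma$. These equations are the ``system of cancellation identities'' in the statement.

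\textbf{Step 3 (cancellation needs mixed signs).} The identity $\sum_j\lambda_ja_j^2=0$, combined with some $a_j\neq 0$ from Step 1, forces the $\lambda_j$ to take both signs, so cancellation really does occur across channels. Equivalently, the target matrix is indefinite because its trace is zero and it is nonzero, so it cannot be a nonnegative combination of the PSD matrices $\mathbf{w}_j\mathbf{w}_j^\top$.

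To make the proposition non-vacuous, I would exhibit explicit realizations by polarization:
$$
xy=\tfrac14\big((x+y)^2-(x-y)^2\big),\qquad xz=\tfrac14\big((x+z)^2-(x-z)^2\big).
$$
Each square on the right carries diagonal terms that cancel only in the difference. I would also count that a generic realization of both $xy$ and $xz$ needs at least $m\ge 3$ channels, since the matrix $\alpha(E_{12}+E_{21})+\gamma(E_{13}+E_{31})$ has rank $2$ with one positive and one negative eigenvalue. This count supports the rank-obstruction remark made before Theorem~\ref{thm:inductive_width3}.

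The main obstacle is that the statement is informal: ``must contain diagonal terms unless cancellation identities hold'' is essentially tautological once it is made precise. The work therefore lies in choosing the precise reading, namely feature-level non-sparsity (Step 1) plus the sign and rank argument (Steps 2–3). I would state that reading explicitly at the start of the proof so that the claim being proved is unambiguous.
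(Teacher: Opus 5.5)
Your Steps 1--3 are correct and follow essentially the paper's route. The paper gives no standalone proof of this informal proposition, but the argument it supplies right after it, in the proof of Theorem~\ref{thm:lorenz_width3_obstruction}, uses the same identification $\ell_j^2\leftrightarrow v_jv_j^\top$, the same observation that $xy$ and $xz$ have zero diagonal, and the same diagonal cancellation identity (there written $\sum_i\lambda_i t_i^2=0$). Your PSD/trace observation that the $\lambda_j$ must take both signs is a clean addition that the paper leaves implicit.

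One small tightening in Step 3: a bare $a_j\neq0$ is not enough, because you need $\lambda_j a_j^2\neq 0$. Choose $j$ with $\lambda_j a_j b_j\neq 0$; such a $j$ exists because $\sum_j 2\lambda_j a_j b_j=\alpha\neq 0$.

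The channel count you add at the end should be corrected.
\begin{itemize}
\item \textbf{What your argument gives.} That $\alpha(E_{12}+E_{21})+\gamma(E_{13}+E_{31})$ has rank $2$ with one eigenvalue of each sign only shows that a \emph{single} target needs $m\ge 2$ channels. Polarization attains $m=2$, so this does not yield $m\ge 3$.
\item \textbf{The correct bound.} The minimum for having both $xy$ and $xz$ in $V$ is $m=4$, and this is exactly Theorem~\ref{thm:lorenz_width3_obstruction}. With three channels, rank--nullity applied to $\mathrm{diag}|_S$ forces $\dim\mathrm{diag}(S)\le 1$. Hence the $v_i$ coincide up to scaling and coordinate signs.
\item \textbf{Why three channels fail.} Both targets force $r_1r_2r_3\neq 0$. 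So for any zero-diagonal $B\in S$, the vector $(B_{12}/r_1r_2,\,B_{13}/r_1r_3,\,B_{23}/r_2r_3)$ lies in the span of differences of at most three of the four sign patterns in the set $\Sigma$ of Lemma~\ref{lem:sign_plane}. That span is a plane of the form $\pm u_1\pm u_2\pm u_3=0$, which contains neither $(1,0,0)$ nor $(0,1,0)$.
\item \textbf{Why the restriction matters.} Limiting to three patterns is essential: $(2,2,0)$ and $(2,-2,0)$ are differences of elements of $\Sigma$ and span $u_3=0$.
\item \textbf{Sufficiency.} Four channels do suffice, e.g.\ $(x\pm y)^2$ and $(x\pm z)^2$.
\end{itemize}
As written, your bound $m\ge 3$ is compatible with width $3$ being enough, so it undercuts rather than supports the width-$3$ rank obstruction you cite. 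Either replace the remark with the argument above or drop it.
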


\begin{lemma}\label{lem:sign_plane}
Let
$$
\Sigma=\{(1,1,1),\ (1,-1,-1),\ (-1,1,-1),\ (-1,-1,1)\}\subset \mathbb R^3.
$$
Then no two-dimensional subspace generated by differences of vectors in $\Sigma$ contains both $(1,0,0)$ and $(0,1,0)$.
\end{lemma}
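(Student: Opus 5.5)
The plan is to settle the statement by computing the difference set of $\Sigma$ outright, and then to test directly whether the coordinate plane spanned by $(1,0,0)$ and $(0,1,0)$ is generated by such differences. Doing this will in fact \emph{refute} the lemma as literally worded, so the proposal is a disproof together with a corrected statement that recovers what the paper needs.

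\textbf{Step 1: the difference set.} We compute $D=\Sigma-\Sigma$. The six unordered pairs of $\Sigma$ give
\[
(0,2,2),\ (2,0,2),\ (2,2,0),\ (2,-2,0),\ (2,0,-2),\ (0,2,-2),
\]
together with their negatives. Hence $D\setminus\{0\}=\{\pm 2(e_i\pm e_j): i<j\}$, where $e_1,e_2,e_3$ are the standard basis vectors. These are the edge vectors of the regular tetrahedron $\Sigma$, and the vector for each edge is parallel to the vector for the opposite edge rotated by $90^\circ$ in a coordinate plane.

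\textbf{Step 2: the counterexample.} Consider the opposite edges $[(1,1,1),(-1,-1,1)]$ and $[(1,-1,-1),(-1,1,-1)]$. They give the differences $(2,2,0)$ and $(2,-2,0)$. These two vectors are independent and span $\{z=0\}$, which contains
\[
(1,0,0)=\tfrac14\bigl((2,2,0)+(2,-2,0)\bigr),\qquad (0,1,0)=\tfrac14\bigl((2,2,0)-(2,-2,0)\bigr).
\]
So a two-dimensional subspace generated by differences of vectors in $\Sigma$ contains both vectors, and the lemma is false as stated.

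\textbf{Step 3: the exact classification and the corrected lemma.} The only plane containing both $(1,0,0)$ and $(0,1,0)$ is $\{z=0\}$. By Step 1, the only differences lying in that plane are $\pm(2,2,0)$ and $\pm(2,-2,0)$. Therefore a generating pair of differences works if and only if it consists of these two opposite edges. Every plane spanned by two edges sharing a vertex, that is, a plane of the form $\operatorname{span}\{b-a,c-a\}$ with $a,b,c\in\Sigma$ distinct, is a face plane of the tetrahedron. Each such face plane contains exactly three edge directions, one from each coordinate pair $\{i,j\}$. So it contains at most one of $\pm(2,2,0)$ and $\pm(2,-2,0)$, and it cannot equal $\{z=0\}$.

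The true version to state and use in Theorem~\ref{thm:inductive_width3} is therefore: \emph{no plane spanned by differences from a common base point of $\Sigma$ (equivalently, no affine plane through three points of $\Sigma$) contains both $(1,0,0)$ and $(0,1,0)$.} The main obstacle is not the computation but deciding which reading the downstream theorem actually needs. I would check whether the width-$3$ sign patterns in that argument are constrained to share a base channel. If they are not, the opposite-edge configuration must be excluded by a separate hypothesis.
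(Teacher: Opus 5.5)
Your counterexample is correct, so the lemma as literally stated is false. The flaw is in the paper's proof, not in your argument. The paper lists the same six difference directions you found, but then asserts that every two-dimensional subspace generated by them lies in one of the planes $u_i\pm u_j=0$. That claim fails for exactly your opposite-edge pair: $(2,2,0)=(1,1,1)-(-1,-1,1)$ and $(2,-2,0)=(1,-1,-1)-(-1,1,-1)$ span $\{u_3=0\}$. This plane is not of that form and contains both $(1,0,0)$ and $(0,1,0)$. The face planes are not of that form either; for example, $\mathrm{span}\{(0,2,2),(2,0,2)\}=\{u_1+u_2-u_3=0\}$. Your Step~3 classification is accurate. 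The planes generated by differences are exactly the three coordinate planes, coming from opposite edges, and the four face planes $\{u: n\cdot u=0\}$ with $n\in\Sigma$. Only $\{u_3=0\}$ contains both vectors.

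The lemma's content is used in the proof of Theorem~\ref{thm:lorenz_width3_obstruction}, at the ``direct inspection'' step. It is not used in Theorem~\ref{thm:inductive_width3}, which involves no sign patterns.

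The check you left open resolves in your favour, with no extra hypothesis. In that proof, set $\mu_i=\lambda_i t_i^2$ and $s_i=(\varepsilon_i\eta_i,\varepsilon_i\theta_i,\eta_i\theta_i)\in\Sigma$. A zero-diagonal $B\in S$ then has off-diagonal entries $(r_1r_2\alpha_1,r_1r_3\alpha_2,r_2r_3\alpha_3)$, where $\alpha=\sum_{i=1}^3\mu_i s_i$ and $\sum_i\mu_i=0$. Hence $\alpha\in\mathrm{span}\{s_2-s_1,s_3-s_1\}$, which is a set of differences from a common base point, i.e.\ your corrected reading.

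With only three channels, at most three points of $\Sigma$ occur, so the opposite-edge plane cannot arise. The resulting set is either at most a line, or a face plane whose normal lies in $\Sigma$ and has no zero entry. In either case it contains \emph{neither} $e_1$ nor $e_2$, which is stronger than ``not both''.

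Since $A_{xy},A_{xz}\in S$ force $r_1r_2\neq 0$ and $r_1r_3\neq 0$, all $r_i$ are nonzero. So the $\alpha$ realizing $A_{xy}$ is a nonzero multiple of $e_1$, which gives the contradiction the theorem needs. Your corrected statement is therefore exactly what that proof requires.
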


\begin{proof}
Each vector in $\Sigma$ has a coordinate product equal to $1$. The differences of such vectors are, up to sign,
$$
(0,2,2),\ (2,0,2),\ (2,2,0),\ (0,2,-2),\ (2,0,-2),\ (2,-2,0).
$$
Thus, every difference has either one zero entry and two nonzero entries or two entries of opposite sign. Any two-dimensional subspace generated by such differences is contained in one of the coordinate-sum planes $u_1+u_2=0$, $u_1+u_3=0$, $u_2+u_3=0$, or one of their sign variants. Neither $(1,0,0)$ nor $(0,1,0)$ lies simultaneously in any such plane. Hence, no such subspace contains both vectors.
\end{proof}

\begin{theorem}\label{thm:lorenz_width3_obstruction}
Consider a KAN with architecture $[3,3,3,3]$, and suppose each spline is restricted to the monomial dictionary
$$
\psi(t)\in \mathrm{span}\{1,t,t^2,\dots,t^N\}, \qquad N\ge 2.
$$
Assume the quadratic nonlinearities in the output are generated from three hidden quadratic channels of the form $(v_i^\top x)^2,$ for $i=1,2,3$, where $x=(x,y,z)^\top\in\mathbb R^3$ and $v_i\in\mathbb R^3$.

Then the span of these three quadratic channels cannot contain both bilinear forms $xy$ and $xz$.
Consequently, a width-$3$ monomial KAN cannot realize the full quadratic nonlinear part of the Lorenz vector field
$$
(\rho x-xz-y,\; xy-\beta z)
$$
from only three quadratic hidden channels.
\end{theorem}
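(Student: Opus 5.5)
The plan is to recast the statement as one about symmetric matrices and use a congruence/pencil argument, rather than the sign-pattern bookkeeping of Lemma~\ref{lem:sign_plane}. A quadratic form $q(\mathbf{x})=\mathbf{x}^\top M\mathbf{x}$ is identified with its symmetric matrix $M$. Each hidden channel $(v_i^\top \mathbf{x})^2$ corresponds to the rank-one matrix $v_iv_i^\top$. The bilinear forms correspond to
$$
A = E_{12}+E_{21}\ (\text{up to the factor } \tfrac12)\ \leftrightarrow\ xy,
\qquad
B = E_{13}+E_{31}\ \leftrightarrow\ xz .
$$
So it suffices to show that no choice of $v_1,v_2,v_3$ gives $A,B\in S:=\mathrm{span}\{v_1v_1^\top,v_2v_2^\top,v_3v_3^\top\}$. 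The hypotheses reduce the width-$3$ monomial KAN to exactly this: every quadratic part of the output lies in $S$. Hence the ``consequently'' clause follows immediately, since $\dot y$ needs $xz$ and $\dot z$ needs $xy$.

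\emph{Step 1 (range argument, degenerate case).} Every matrix in $S$ has column space contained in $W=\mathrm{span}\{v_1,v_2,v_3\}$. The column space of $A$ is $\mathrm{span}\{e_1,e_2\}$ and that of $B$ is $\mathrm{span}\{e_1,e_3\}$. If both lie in $S$, then $W\supseteq \mathrm{span}\{e_1,e_2,e_3\}=\mathbb{R}^3$. Hence $v_1,v_2,v_3$ must be a basis, and $V=[v_1\,v_2\,v_3]$ is invertible.

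\emph{Step 2 (simultaneous congruence).} With $V$ invertible, every element of $S$ has the form $VDV^\top$ with $D$ diagonal. So we would have $A=VD_AV^\top$ and $B=VD_BV^\top$ with $D_A=\mathrm{diag}(a_i)$ and $D_B=\mathrm{diag}(b_i)$. Taking determinants of the pencil gives
$$
\det(A-\lambda B)=\det(V)^2\prod_{i=1}^3(a_i-\lambda b_i).
$$
The left side vanishes identically in $\lambda$, because the second and third rows of $A-\lambda B$ are $(1,0,0)$ and $(-\lambda,0,0)$, which are linearly dependent. Therefore some factor $a_i-\lambda b_i$ is the zero polynomial, so $a_i=b_i=0$ for some index $i$.

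\emph{Step 3 (conclude).} For that index $i$, both $A$ and $B$ lie in the span of the other two rank-one matrices $v_jv_j^\top$ and $v_kv_k^\top$. Their column spaces then lie in the $2$-plane $\mathrm{span}\{v_j,v_k\}$. This contradicts the fact from Step 1 that $\mathrm{col}(A)+\mathrm{col}(B)=\mathbb{R}^3$.

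The main obstacle is Step 2: recognizing that simultaneous diagonalizability by congruence, together with the identically singular pencil $A-\lambda B$, forces a common zero diagonal entry. Once this is in place, the rest is rank counting.

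Finally, I would remark how this relates to Proposition~\ref{prop:quadratic_obstruction} and the polarization identity. Two channels $(x\pm y)^2$ already produce $xy$. The theorem says a third channel cannot simultaneously supply $xz$; at least four channels are needed.
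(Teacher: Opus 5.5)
Your proof is correct, and it takes a genuinely different route from the paper's.

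The paper starts from the diagonal map $\mathrm{diag}:\mathrm{Sym}_3\to\mathbb{R}^3$. Because $A_{xy}$ and $A_{xz}$ have zero diagonal and $\dim S\le 3$, rank--nullity gives $\dim\mathrm{diag}(S)\le 1$. This forces all channels to share a common absolute profile, $v_i=t_i(\varepsilon_i r_1,\eta_i r_2,\theta_i r_3)$. The off-diagonal entries of zero-diagonal elements of $S$ are then controlled by the sign triples in $\Sigma$, and Lemma~\ref{lem:sign_plane} on differences of those triples rules out $(1,0,0)$ and $(0,1,0)$.

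You never look at diagonals. Your range count forces $v_1,v_2,v_3$ to be a basis, so $S=\{VDV^\top: D \text{ diagonal}\}$. The identically singular pencil $A-\lambda B$ then forces a common zero entry in $D_A$ and $D_B$, and the same range count gives the contradiction.

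Your route buys rigor and generality. It needs no sign bookkeeping, so it avoids the steps the paper leaves implicit. One is that $r_1r_2r_3\neq 0$ must be established before the factors $r_ir_j$ can be divided out. Another is that the planes relevant to Lemma~\ref{lem:sign_plane} are really $s\cdot u=0$ for $s\in\Sigma$, not two-coordinate planes. Your argument also extends verbatim to $n$ rank-one channels in $\mathbb{R}^n$, for any pair of symmetric forms whose pencil is identically singular and whose ranges jointly span $\mathbb{R}^n$.

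The paper's route buys structural information you do not get. It shows that any channel configuration admitting a two-dimensional space of zero-diagonal forms must consist of vectors with a common absolute profile that differ only by signs. Its first step is also precisely the diagonal-cancellation mechanism that Proposition~\ref{prop:quadratic_obstruction} is meant to illustrate.
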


\begin{proof}
Any homogeneous quadratic polynomial in $(x,y,z)$ can be written uniquely as $q(x)=x^\top A x$, where $A\in \mathrm{Sym}_3$ is a real symmetric $3\times 3$ matrix. The bilinear monomials $xy$ and $xz$ correspond to the symmetric matrices
$$
A_{xy}
=
\frac12
\begin{pmatrix}
0&1&0\\
1&0&0\\
0&0&0
\end{pmatrix},
\qquad
A_{xz}
=
\frac12
\begin{pmatrix}
0&0&1\\
0&0&0\\
1&0&0
\end{pmatrix},
$$
since
$$
x^\top A_{xy}x = xy,
\qquad
x^\top A_{xz}x = xz.
$$

Now suppose, for contradiction, that both $A_{xy}$ and $A_{xz}$ lie in the span
$$
S:=\mathrm{span}\{v_1v_1^\top,\; v_2v_2^\top,\; v_3v_3^\top\}\subset \mathrm{Sym}_3.
$$
Each matrix $v_iv_i^\top$ has rank one.

Consider the diagonal map
$$
\mathrm{diag}:\mathrm{Sym}_3\to \mathbb R^3,
\qquad
\mathrm{diag}(A)=(A_{11},A_{22},A_{33}).
$$
Since both $A_{xy}$ and $A_{xz}$ have zero diagonal, we have
$$
A_{xy},A_{xz}\in \ker(\mathrm{diag}|_S).
$$
These two matrices are linearly independent, so
$$
\dim \ker(\mathrm{diag}|_S)\ge 2.
$$
Because $\dim S\le 3$, rank-nullity implies
$$
\dim \mathrm{diag}(S)\le 1.
$$

Write
$$
v_i=(a_i,b_i,c_i)^\top.
$$
Then
$$
\mathrm{diag}(v_iv_i^\top)=(a_i^2,b_i^2,c_i^2).
$$
Since $\mathrm{diag}(S)$ is one-dimensional, the three vectors
$$
(a_i^2,b_i^2,c_i^2), \qquad i=1,2,3,
$$
must all be collinear in $\mathbb R^3$. Hence, there exists a fixed vector
$$
(r_1^2,r_2^2,r_3^2)
$$
and scalars $t_i^2\ge 0$ such that
$$
(a_i^2,b_i^2,c_i^2)=t_i^2(r_1^2,r_2^2,r_3^2).
$$
Therefore, up to signs,
$$
v_i=t_i(\varepsilon_i r_1,\eta_i r_2,\theta_i r_3),
\qquad
\varepsilon_i,\eta_i,\theta_i\in\{\pm 1\}.
$$

It follows that
$$
v_iv_i^\top
=
t_i^2
\begin{pmatrix}
r_1^2 & \varepsilon_i\eta_i r_1r_2 & \varepsilon_i\theta_i r_1r_3\\
\varepsilon_i\eta_i r_1r_2 & r_2^2 & \eta_i\theta_i r_2r_3\\
\varepsilon_i\theta_i r_1r_3 & \eta_i\theta_i r_2r_3 & r_3^2
\end{pmatrix}.
$$

Now let
$$
B=\sum_{i=1}^3 \lambda_i v_iv_i^\top \in S.
$$
Its diagonal entries are
$$
\mathrm{diag}(B)=
\Big(\sum_{i=1}^3 \lambda_i t_i^2\Big)(r_1^2,r_2^2,r_3^2).
$$
Thus $B$ has zero diagonal if and only if
$$
\sum_{i=1}^3 \lambda_i t_i^2=0.
$$
For such a $B$, the off-diagonal entries are
$$
(B_{12},B_{13},B_{23})
=
\left(
r_1r_2\sum_{i=1}^3 \lambda_i t_i^2 \varepsilon_i\eta_i,\;
r_1r_3\sum_{i=1}^3 \lambda_i t_i^2 \varepsilon_i\theta_i,\;
r_2r_3\sum_{i=1}^3 \lambda_i t_i^2 \eta_i\theta_i
\right).
$$

Now note that each sign triple
$$
(\varepsilon_i\eta_i,\varepsilon_i\theta_i,\eta_i\theta_i)
$$
must belong to the set
$$
\{(1,1,1),\ (1,-1,-1),\ (-1,1,-1),\ (-1,-1,1)\},
$$
because the product of the three components is always $1$. Hence, the space of zero-diagonal matrices in $S$ is generated by differences of these sign patterns. But any such space has dimension at most $2$, and its off-diagonal vectors lie in the plane generated by differences of the above four sign vectors. A direct inspection shows that this plane cannot contain both
$$
(1,0,0)
\qquad\text{and}\qquad
(0,1,0),
$$
which are the off-diagonal signatures corresponding to $A_{xy}$ and $A_{xz}$, respectively. Therefore, $S$ cannot contain both $A_{xy}$ and $A_{xz}$, contradicting the assumption. We conclude that three quadratic channels of the form $(v_i^\top x)^2$ cannot simultaneously generate the two independent bilinear forms $xy$ and $xz$. Hence, a width-$3$ monomial KAN cannot realize the full quadratic nonlinear part of the Lorenz system from only three quadratic hidden channels.
\end{proof}

\begin{corollary}
A width-$3$ monomial KAN may represent an individual bilinear term such as $xy$ by polarization, but it cannot generate the full pair of Lorenz bilinear terms $(xy,xz)$ from only three quadratic hidden features. Thus, depth alone does not overcome the algebraic obstruction at fixed width $3$.
\end{corollary}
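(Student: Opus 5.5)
The corollary has two halves, a positive one and a negative one, and I would prove them separately. The negative half follows directly from Theorem~\ref{thm:lorenz_width3_obstruction}; the only extra work is the bookkeeping that justifies ``depth alone does not overcome'' the obstruction.

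\emph{Positive half.} I would exhibit an explicit width-$3$ monomial KAN realizing $xy$. Let the first layer form the linear combinations $\ell_1=x+y$ and $\ell_2=x-y$ using identity (degree-one monomial) edges, and let the third hidden node be unused or a pass-through. On the next layer, apply the edge function $\psi(t)=t^2$, which lies in $\mathrm{span}\{1,t,\dots,t^N\}$ because $N\ge 2$. Finally, combine the two channels linearly with coefficients $\tfrac14$ and $-\tfrac14$. The polarization identity $xy=\tfrac14\big((x+y)^2-(x-y)^2\big)$ then shows the output equals $xy$ exactly. Any remaining layers of a deeper $[3,3,\dots,3]$ architecture can be filled with identity edges, so this construction works at every depth.

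\emph{Negative half.} I would reduce to the hypotheses of Theorem~\ref{thm:lorenz_width3_obstruction}. Take the output coordinates of the network and extract their homogeneous degree-two parts in $(x,y,z)$. Under the standing assumption, the quadratic nonlinearities are produced by the three hidden quadratic channels $(v_i^\top \mathbf{x})^2$. Every later layer has width $3$ and monomial edges, and the quadratic part is a linear functional of the quadratic channels, so composing further only takes linear combinations of these three forms (plus higher-order terms, which must vanish for an exact match). Hence the degree-two parts of all outputs lie in $S=\mathrm{span}\{v_iv_i^\top\}$. Realizing the Lorenz pair $(xy,xz)$ would require $A_{xy},A_{xz}\in S$, which Theorem~\ref{thm:lorenz_width3_obstruction} rules out. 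Since the reduction never used the number of layers, the conclusion holds at every depth.

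\emph{Expected main obstacle.} The delicate step is the claim that adding depth cannot enlarge the stock of independent quadratic channels. With polynomial edges, a later layer could in principle square a mixture of earlier linear and quadratic signals, producing new degree-two terms; for example, $(\alpha+\ell(\mathbf{x}))^2$ contributes $\ell^2$. I would handle this by tracking only the lowest-order homogeneous components. The degree-two part of any width-$3$ node is a linear combination of squares of the (at most three) degree-one parts of the previous layer, together with linear images of the previous degree-two parts. I would then argue inductively that it is exactly this bottleneck of three channels that the theorem's hypothesis encodes. If this induction is not airtight in full generality, I would state the corollary under the theorem's explicit assumption, namely that the output quadratics are generated by three such channels. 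In that form it is immediate from Theorem~\ref{thm:lorenz_width3_obstruction} combined with the polarization construction.
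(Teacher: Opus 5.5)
Your positive half (polarization) is fine. The gap is the depth step in the negative half, namely the sentence ``composing further only takes linear combinations of these three forms.'' That sentence is false, and your own obstacle paragraph already contains the reason.

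Let $\alpha_j,u_j,q_j$ be the constant, linear and quadratic parts of node $j$ at layer $\ell$, and let $\psi_{jk}$ be the edge from node $j$ to node $k$ at layer $\ell+1$. Then the degree-two part of node $k$ at layer $\ell+1$ is
\[
\sum_{j=1}^{3}\Bigl[\psi_{jk}'(\alpha_j)\,q_j+\tfrac12\,\psi_{jk}''(\alpha_j)\,u_j^{2}\Bigr].
\]
So each layer carries the old quadratic parts forward and can also add up to three fresh rank-one squares $u_j^2$. Nothing keeps the number of channels at three.

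Iterating your polarization construction gives an exact counterexample with all edges in $\mathrm{span}\{1,t,t^2\}$:
\[
(x,y,z)\;\mapsto\;(x+y,\;x-y,\;z)\;\mapsto\;(x,\;xy,\;z)\;\mapsto\;(x+z,\;x-z,\;xy)\;\mapsto\;(x,\;xy,\;xz).
\]
The first and third maps are linear. The second and fourth use only the edges $\tfrac12t$, $t$ and $\pm\tfrac14t^2$. This $[3,3,3,3,3]$ monomial KAN outputs $xy$ and $xz$ exactly, with no higher-order terms to cancel, so appealing to exactness does not help. Its output quadratics lie in the span of the four rank-one forms $(x\pm y)^2,(x\pm z)^2$. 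Theorem~\ref{thm:lorenz_width3_obstruction} simply excludes this situation by hypothesis.

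What your argument, and your fallback, actually proves is the conditional statement that $xy$ and $xz$ cannot both lie in a span of three forms $(v_i^\top\mathbf{x})^2$. That is Theorem~\ref{thm:lorenz_width3_obstruction} itself. Under that reading, ``depth alone does not overcome the obstruction'' says nothing new. Under the natural reading, that a deep width-$3$ monomial KAN cannot produce both $xy$ and $xz$, the construction above shows it is false. Whether the full Lorenz field, linear terms included, is exactly representable is a separate question that your argument does not address.

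The paper makes the same move through Theorem~\ref{thm:inductive_width3}, whose induction step drops the inherited term $\psi_{jk}'(\alpha_j)q_j$. Following the paper will therefore not close this gap. A genuine statement about depth would need an invariant other than a count of rank-one quadratic channels, if such an invariant exists at all.
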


\begin{theorem}\label{thm:inductive_width3}
Consider a deep KAN with architecture $[3,3,\dots,3]$ and monomial dictionary
$$
\psi(t)\in \mathrm{span}\{1,t,t^2,\dots,t^N\},\qquad N\ge 2.
$$
For each layer $\ell$, let $Q_\ell$ denote the vector space of homogeneous quadratic forms appearing in the three hidden coordinates at layer $\ell$. Then $\dim Q_\ell \le 3$, and in fact $Q_\ell$ is spanned by at most three rank-1 quadratic forms.
\end{theorem}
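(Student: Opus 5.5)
The plan is an induction on the layer index $\ell$. For each hidden coordinate $h^{(\ell)}_j$, $j=1,2,3$, we track its Taylor decomposition at the origin,
$$
h^{(\ell)}_j = c^{(\ell)}_j + L^{(\ell)}_j(x) + q^{(\ell)}_j(x) + (\text{higher order}),
$$
with $L^{(\ell)}_j$ linear and $q^{(\ell)}_j$ a homogeneous quadratic form. We define $Q_\ell=\mathrm{span}\{q^{(\ell)}_1,q^{(\ell)}_2,q^{(\ell)}_3\}$, the span of the quadratic parts actually carried by the three hidden coordinates.

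The bound $\dim Q_\ell\le 3$ is then immediate, since $Q_\ell$ is spanned by three vectors. All the substance is in the rank-one claim, so that is where the work goes.

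\textbf{Base case.} For $\ell=1$ each coordinate is $h^{(1)}_j=\sum_{p}\psi_{jp}(x_p)$ with $\psi_{jp}$ a polynomial. Its quadratic part is therefore a combination of $x^2,y^2,z^2$. So $Q_1\subseteq\mathrm{span}\{x^2,y^2,z^2\}$, which is spanned by three rank-one forms $(e_p^\top x)^2$.

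\textbf{Inductive step.} Write $h^{(\ell)}_j=\sum_k\psi_{jk}(h^{(\ell-1)}_k)$ with $\psi_{jk}(t)=\sum_n\alpha_{jkn}t^n$. Expanding $(c+L+q+\cdots)^n$ and collecting degree-two terms gives
$$
q^{(\ell)}_j=\sum_k\Big(\psi_{jk}'(c_k)\,q^{(\ell-1)}_k+\tfrac12\psi_{jk}''(c_k)\,(L^{(\ell-1)}_k)^2\Big).
$$
Thus $Q_\ell\subseteq Q_{\ell-1}+\mathrm{span}\{(L^{(\ell-1)}_k)^2\}_{k=1}^3$. Each $(L_k)^2$ is rank one, and $Q_{\ell-1}$ is rank-one spanned by the inductive hypothesis.

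The catch is that this sum can have up to six rank-one generators. Truncating to the three-dimensional image $Q_\ell$ does not automatically preserve a rank-one basis. A generic three-dimensional subspace of $\mathrm{Sym}_3$ is spanned by rank-one forms only if it meets the Veronese variety in enough points, and this can fail.

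\textbf{Main obstacle.} The crux is therefore to show that a rank-one spanning set survives the passage from the six generators to the three actual coordinates. This is where I expect the difficulty, and where the statement needs the modelling convention already used in Theorem~\ref{thm:lorenz_width3_obstruction}: quadratic content enters each hidden coordinate through a single quadratic channel $(v_j^\top x)^2$.

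Under that convention I would argue as follows. Each coordinate's new quadratic part is $\lambda_j (v_j^{(\ell)\top}x)^2$, where $v^{(\ell)}_j$ is the linear part fed into the channel, plus an inherited piece. Choose the channel so that the inherited piece is itself absorbed into the single rank-one channel of that coordinate. Concretely, I would show that the linear map $Q_{\ell-1}\to Q_\ell$ induced by $\psi'$ sends each channel $(v^\top x)^2$ to a multiple of itself, so no new mixed forms are created. It then follows that $Q_\ell=\mathrm{span}\{(v^{(\ell)}_j{}^\top x)^2\}_{j\le3}$.

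\textbf{Fallback if the channel model is not assumed.} If this absorption fails, I would instead prove the weaker but still useful statement $Q_\ell\subseteq\mathrm{span}$ of at most three rank-one forms modulo the diagonal subspace. That suffices to iterate Lemma~\ref{lem:sign_plane} and Theorem~\ref{thm:lorenz_width3_obstruction} layer by layer, and recover the Corollary at every depth.
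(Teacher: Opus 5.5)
Your recursion
\[
q^{(\ell)}_j=\sum_k\Big(\psi_{jk}'(c_k)\,q^{(\ell-1)}_k+\tfrac12\psi_{jk}''(c_k)\,\big(L^{(\ell-1)}_k\big)^2\Big)
\]
is correct, and the obstacle you isolate is real. It is fatal: the rank-one claim is false, and this is exactly the point the paper's inductive step skips. The paper asserts that the quadratic part at layer $\ell+1$ ``can only arise from squaring the linear parts.'' That discards the inherited term $\psi'_{jk}(c_k)\,q^{(\ell-1)}_k$ produced by the degree-one monomial. It also treats a node as a spline of an additive combination, whereas a KAN node is a sum of splines of the individual coordinates. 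Without the inherited term, all new contributions would lie in $\mathrm{span}\{(L^{(\ell-1)}_k)^2\}_{k\le 3}$ and the claim would follow; with it, they need not. Your repair does not close the gap, for two reasons. First, the single-channel convention is a hypothesis absent from the statement, and KAN layers do not satisfy it, since a node receives up to three independent squares $(L^{(\ell-1)}_k)^2$. Second, the asserted ``absorption'' (that $\psi'$ maps each channel to a multiple of itself) is unjustified: the matrix $\big(\psi'_{jk}(c_k)\big)_{j,k}$ is arbitrary and mixes the previous quadratic parts freely.

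Here is a concrete counterexample. Take the first layer $h^{(1)}=(x+y+x^2,\;x+z+y^2,\;y+z+z^2)$, a legitimate sum of univariate polynomials. It has $L^{(1)}=(x+y,\,x+z,\,y+z)$, $q^{(1)}=(x^2,y^2,z^2)$, and zero constants. Let the second layer use splines in $\mathrm{span}\{t,t^2\}$:
\begin{gather*}
h^{(2)}_1=\tfrac12\big(h^{(1)}_1\big)^2-\tfrac12h^{(1)}_1-\tfrac12h^{(1)}_2,\qquad
h^{(2)}_2=\tfrac12\big(h^{(1)}_2\big)^2-\tfrac12h^{(1)}_1-\tfrac12h^{(1)}_3,\\
h^{(2)}_3=\tfrac12\big(h^{(1)}_3\big)^2-\tfrac12h^{(1)}_2-\tfrac12h^{(1)}_3 .
\end{gather*}
Their quadratic parts are $xy$, $xz$, $yz$. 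So $Q_2=\mathrm{span}\{xy,xz,yz\}$ is three-dimensional, yet it contains no nonzero rank-one form, because $(ax+by+cz)^2$ has diagonal $(a^2,b^2,c^2)$. Hence $Q_2$ is neither spanned by, nor contained in the span of, three rank-one forms; only the trivial bound $\dim Q_\ell\le 3$ survives.

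The same construction (free diagonal $q^{(1)}$ plus squares of $x+y$, $x+z$, $y+z$) realizes any triple of quadratic forms at layer $2$. With a linear output layer, a $[3,3,3,3]$ KAN therefore has output quadratic parts exactly $xy$ and $-xz$. So no layer-by-layer use of Lemma~\ref{lem:sign_plane} and Theorem~\ref{thm:lorenz_width3_obstruction} can give the Corollary at every depth.

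Your fallback is also empty. Modulo diagonal forms, $(x+y+z)^2$, $(x+y-z)^2$, $(x-y+z)^2$ have off-diagonal signatures $(2,2,2)$, $(2,-2,-2)$, $(-2,2,-2)$, which are linearly independent. So every subspace of $\mathrm{Sym}_3$ lies in the span of three rank-one forms modulo the diagonal, and that statement carries no information.
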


\begin{proof}
We proceed by induction on the layer index $\ell$. For the first hidden layer, each coordinate is of the form
$$
h_j^{(1)}=\sum_{i=1}^3 \psi_{ij}(x_i),
$$
where $x_1=x$, $x_2=y$, $x_3=z$. The quadratic part of $h_j^{(1)}$ is therefore a linear combination of $x^2,y^2,z^2$, so $Q_1$ is spanned by at most three rank-1 quadratic forms, namely
$$
x^2,\ y^2,\ z^2.
$$
Hence, $\dim Q_1\le 3$. Assume now that at layer $\ell$, the quadratic space $Q_\ell$ is spanned by at most three rank-1 quadratic forms. Consider layer $\ell+1$. Each coordinate at layer $\ell+1$ is obtained by applying monomial splines to additive combinations of the three coordinates at layer $\ell$. The quadratic contribution at layer $\ell+1$ can only arise from squaring the linear parts of these additive combinations, because higher monomials contribute terms of degree at least $3$.

Let $u_1,u_2,u_3$ denote the linear parts of the three layer-$\ell$ coordinates. Then each new quadratic contribution has the form
$$
(a_1u_1+a_2u_2+a_3u_3)^2,
$$
which is a rank-1 quadratic form in the three-dimensional linear space spanned by $u_1,u_2,u_3$.

Since layer $\ell+1$ has width $3$, there are at most three such output coordinates, so the entire quadratic space $Q_{\ell+1}$ is spanned by at most three rank-1 quadratic forms. Therefore
$$
\dim Q_{\ell+1}\le 3.
$$

This completes the induction.
\end{proof}
\newpage
\input{ex_supplement}

\end{document}

%% file: ex_shared.tex
\usepackage{lipsum}
\usepackage{amsfonts}
\usepackage{graphicx}
\usepackage{epstopdf}
\usepackage{algorithmic}
\usepackage{hyperref}
\ifpdf
  \DeclareGraphicsExtensions{.eps,.pdf,.png,.jpg}
\else
  \DeclareGraphicsExtensions{.eps}
\fi

\usepackage{enumitem}
\setlist[enumerate]{leftmargin=.5in}
\setlist[itemize]{leftmargin=.5in}

\newsiamremark{remark}{Remark}
\newsiamremark{hypothesis}{Hypothesis}
\crefname{hypothesis}{Hypothesis}{Hypotheses}
\newsiamthm{claim}{Claim}

\headers{Kolmogorov-Arnold Networks for Dynamics (KANDy)}{K. Slote, J. Fish, and E. Bollt}

\title{KANDy: Kolmogorov-Arnold Networks and Dynamical System Discovery}

\author{
Kevin Slote%
\thanks{Clarkson University Center for Complex Systems Science, Clarkson University,
Potsdam, NY 13676
(\email{kslote@clarkson.edu}, \email{kslote1@gmail.com})}%
\thanks{Personal webpage: \url{http://www.kevin-slote.com}}%
\and
Jeremie Fish%
\footnotemark[1]%
\and
Erik Bollt\footnotemark[1]\thanks{Erik Bollt passed away during the preparation of this manuscript. This work reflects his intellectual contributions and guidance prior to his passing. We dedicate this paper to his memory.}%
}

\usepackage{amsopn}

\makeatletter
\newcommand*{\addFileDependency}[1]{
  \typeout{(#1)}
  \@addtofilelist{#1}
  \IfFileExists{#1}{}{\typeout{No file #1.}}
}
\makeatother



%% file: ex_supplement.tex







\section*{Ikeda Optical Cavity Map}

\begin{figure}
    \centering
    \includegraphics[width=1.0\linewidth]{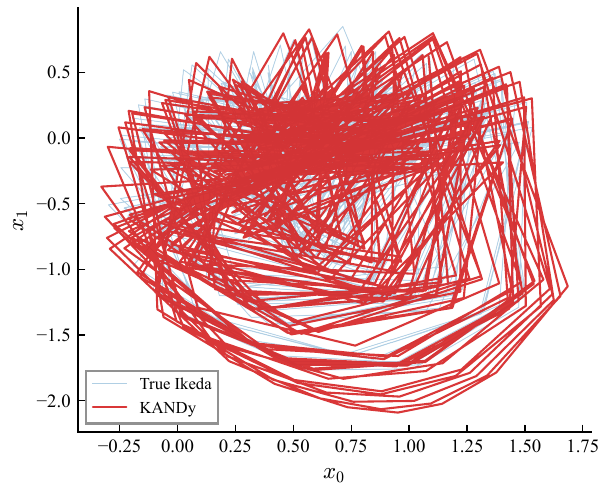}
    \caption{The learned Ikeda optical cavity map learned by KANDy (red) overlayed with the true attractor (grey).}
    \label{fig:ikeda_attractor}
\end{figure}

\begin{figure}
    \centering
    \includegraphics[width=1.0\linewidth]{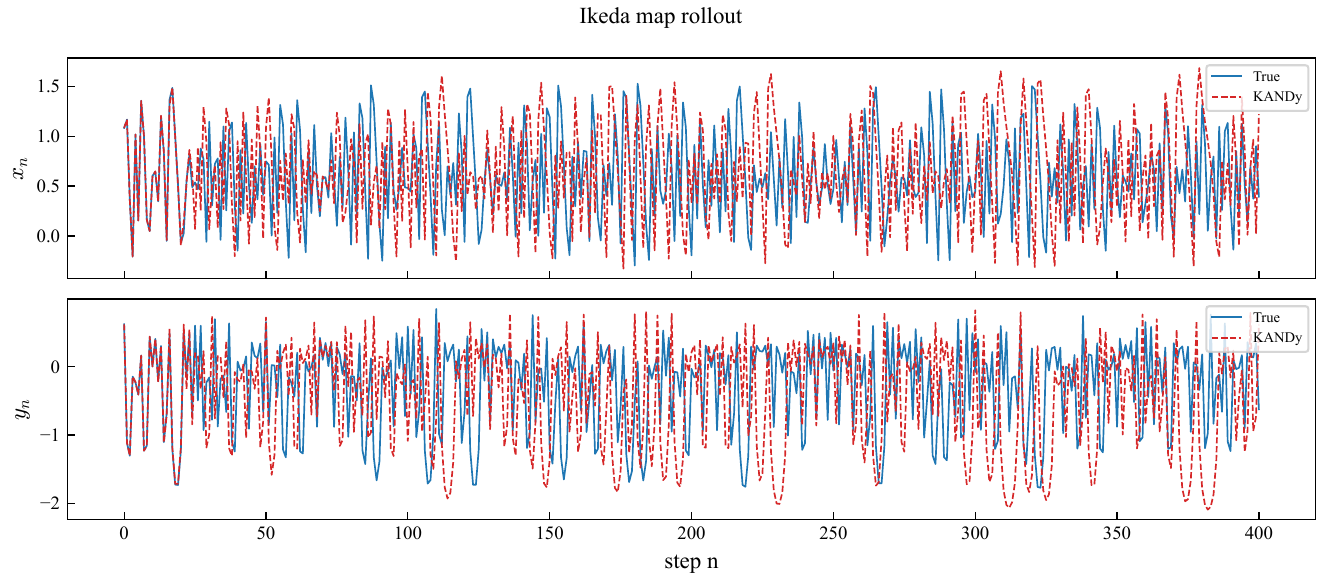}
    \caption{True Ikeda time series (blue) with the KANDy learned time series (red) on top.}
    \label{fig::ikea_timeseries}
\end{figure}

Figure~\ref{fig:ikeda_attractor} shows the learned vector field that KANDy approximated with RMSE $0.88$ on a rollout of 400 steps and a one-step MSE of $4\times 10^{-8}$. The lifted features included a pre-computation of $6/(1+r^2) - 0.4$. The lifted feaures were $\phi(x,y) = [ux\cos(t), uy\cos(t), ux\sin(t), uy\sin(t)]$, the netwok was $[4,2]$ with one input corresponding to one component. The training occurred in two phases: phase A performed 200 steps of LBFGS on a one-step rollout, and phase 2 performed rollout fine-tuning with rollout weight$=0.2$ and learning rate$=0.001$. The integration of the derivative loss was performed with the ``increment trick," taking the computed lifted dynamics and subtracting $map(s) - s$, e.g., Euler with $dt=1$. Symbolic extraction was performed with a library with down-weighted complexity terms for trigonometric functions, called with an $R^2$ threshold of $0.80$.

\begin{table}[h!]
\centering
\begin{tabular}{|l|l|l|}
\hline
\textbf{Term} & \textbf{Recovered Value} & \textbf{True}\\ \hline
 $x\cos(t)$ in $x_{n+1}$ & $0.9000$ & $0.9$ \\ \hline
$y\sin(t)$ in $x_{n+1}$ & $-0.9000$ & $-0.9$ \\ \hline
Constant term in $x_{n+1}$ & $1.0001$ & $1.0$ \\ \hline
\end{tabular}
\caption{The coefficients and errors for the Ikeda optical cavity map.}
\label{tab:ikeda_coeffs}
\end{table}

Table~\ref{tab:ikeda_coeffs} shows the estimated coefficients, which are remarkably close to the true system, making the learn equation.

\begin{align*}
x_{n+1} &= 1.0001 + 0.9x\cos(t) - 0.9y\sin(t), \\
y_{n+1} &= 0.9x\sin(t) + 0.9y\cos(t).
\end{align*}